\numberwithin{equation}{section}
\def\pmod #1{\ ({\rm{mod}}\ #1)}
\theoremstyle{plain}
\newtheorem{theorem}{Theorem}
\newtheorem{lemma}{Lemma}
\newtheorem{proposition}{Proposition}
\newtheorem{conjecture}{Conjecture}
\theoremstyle{definition}
\patchcmd{\@settitle}{\uppercasenonmath\@title}{}{}{}
\patchcmd{\@setauthors}{\MakeUppercase}{}{}{}
\patchcmd{\section}{\scshape}{}{}{}
\begin{document}

\title
[{On a conjecture of M. R. Murty and V. K. Murty II}]
{On a conjecture of M. R. Murty and V. K. Murty II}

\author
[Y. Ding,\quad  V.Z. Guo\quad  {\it and}\quad Yu Zhang] {Yuchen Ding,\quad  Victor Zhenyu Guo\quad  {\it and}\quad Yu Zhang}

\address{(Yuchen Ding) School of Mathematical Science,  Yangzhou University, Yangzhou 225002, People's Republic of China}
\email{ycding@yzu.edu.cn}
\address{(Victor Zhenyu Guo) School of Mathematics and Statistics,  Xi'an Jiaotong University, Xi'an 710049, People's Republic of China}
\email{guozyv@xjtu.edu.cn}
\address{(Yu Zhang) School of Mathematics, Shandong University, Jinan 250100, Shandong, People's Republic of China}
\email{yuzhang0615@mail.sdu.edu.cn}

\keywords{primes in arithmetic progressions; primes; Elliott--Halberstam Conjecture; Brun--Titchmarsh inequality.} \subjclass[2010]{Primary 11A41.}

\begin{abstract}
Let $\omega^*(n)$ be the number of primes $p$ such that $p-1$ divides $n$. Assuming the Elliott--Halberstam Conjecture, we prove a conjecture posted by M. R.  Murty and V. K. Murty in 2021 which states that
$$\sum_{n\leqslant x}\omega^*(n)^2\sim 2\frac{\zeta(2)\zeta(3)}{\zeta(6)}x\log x, \quad \text{as} \quad x\rightarrow \infty.$$
The above sum was first investigated by Prachar in 1955.

One of the key ingredients in our argument is the application of a sieve result on estimating various certain summations involving primes in arithmetic progressions, rather than a direct use of the Brun--Titchmarsh inequality which would not be applicable for our task.
\end{abstract}
\maketitle

\section{Introduction}
The investigations of the normal order of certain arithmetic functions started from the paper of Hardy and Ramanujan \cite{HR}, proving that almost all integers $n$ satisfy $\omega(n)\sim \log\log n$ as $n\rightarrow \infty$, where $\omega(n)$ is the number of distinct prime divisors of $n$.  Later, Tur\'an \cite{Tu} simplified the proof significantly, by showing the following two asymptotic formulae:
$$\sum_{n\leqslant x}\omega(n)=x\log\log x+Bx+O(x/\log x)$$
and
$$\sum_{n\leqslant x}\omega(n)^2=x(\log\log x)^2+O(x\log\log x),$$
where $B$ is an absolute constant.

Let $\omega^*(n)$ be the number of primes $p$ such that $p-1$ divides $n$, which is a variant of the arithmetic function $\omega$. N\"{o}bauer \cite{Nobauer} studied this arithmetic function while investigating the number of a special group. Later, Prachar \cite{Pr} considered the arithmetic properties of $\omega^*$ similar to Tur\'an \cite{Tu}'s contributions on $\omega$ and proved that
$$\sum_{n\leqslant x}\omega^*(n)=x\log\log x+Bx+O(x/\log x)$$
as well as
$$\sum_{n\leqslant x}\omega^*(n)^2=O\left(x(\log x)^2\right).$$
Prachar moreover showed that $$\omega^*(n)>\exp\left(a\log n/(\log\log n)^2\right)$$
for infinitely many integers $n$, where $a$ is an absolute constant. Adleman, Pomerance and Rumely \cite{APR} improved this to $$\omega^*(n)>\exp\left(a\log n/\log\log n\right).$$
Prachar's work relies on the estimation of a function counting the number of pairs of primes $p$ and $q$ so that the least common multiple $[p-1,q-1]\leqslant x$. Erd\H os and Prachar \cite{EP} proved that the counting function is bounded by $O(x\log\log x)$ and claimed that the estimation can be improved in a remark of their article. Murty and Murty \cite{MM} followed the remark and improved the bound to $O(x)$. As a consequence, they showed that
$$x(\log\log x)^3\ll\sum_{n\leqslant x}\omega^*(n)^2\ll x\log x.$$
As remarked by Murty and Murty, the above lower bound means that $\omega^*(n)$ does not have a normal order. Moreover, Murty and Murty conjectured that there is some positive constant $C$ such that
$$\sum_{n\leqslant x}\omega^*(n)^2\sim Cx\log x$$
as $x\rightarrow \infty$, or equivalently,
$$\sum_{[p-1,q-1]\leqslant x}\frac{1}{[p-1,q-1]}\sim C\log x.$$
In a former note, Ding \cite{Di} showed that
$$a_1x\log x\leqslant \sum_{n\leqslant x}\omega^*(n)^2\leqslant a_2x\log x,$$
where $a_1$ and $a_2$ are two absolute constants.

In this article, we give a conditional proof of the conjecture by Murty and Murty. Precisely, we confirm the conjecture under the Elliott--Halberstam Conjecture  \cite{EH}, which is of the following.

\begin{conjecture}[Elliott--Halberstam]\label{conjecture1} For any fixed positive real numbers $A$ and $\epsilon$, we have the following estimate
$$\sum_{d\leqslant x^{1-\epsilon}}\max_{y\leqslant x}\left|\pi(y;d,1)-\frac{\mathrm{li}y}{\varphi(d)}\right|\ll_{A,\epsilon} \frac{x}{(\log x)^A},$$
where $\pi(y;d,1)$ is the number of primes $p\equiv 1\pmod{d}$ up to $y$, and $\mathrm{li}y=\int_{2}^{y}\frac1{\log t} dt.$
\end{conjecture}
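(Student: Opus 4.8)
This is the Elliott--Halberstam conjecture, one of the central open problems in analytic number theory, so what follows is necessarily a research program rather than a complete argument. The natural point of departure is the Bombieri--Vinogradov theorem, which proves exactly the stated inequality but with the range of summation shortened to $d\leqslant \sqrt{x}/(\log x)^{B}$, i.e. level of distribution $\theta=1/2$; the entire task is therefore to enlarge the admissible level from $\theta=1/2$ to $\theta=1$. The plan is first to recall the structure of the Bombieri--Vinogradov proof: a Heath-Brown or Vaughan identity decomposes $\Lambda(n)$ into bilinear Type~I and Type~II sums, after which one passes to Dirichlet characters and applies the large sieve inequality to average over both the moduli $d$ and the residues simultaneously. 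The large sieve is precisely what imposes the barrier at $\theta=1/2$, since the bound $\sum_{q\leqslant Q}\sum_{\chi}^{*}\bigl|\sum_{n\leqslant N}a_n\chi(n)\bigr|^2\ll(Q^2+N)\|a\|_2^2$ (the star restricting to primitive $\chi\bmod q$) is lossless only when $Q^2\approx N\approx x$, forcing $Q\approx\sqrt{x}$.

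To break past this barrier I would abandon the large sieve in favour of Linnik's dispersion method combined with averaged estimates for Kloosterman sums, in the spirit of Fouvry--Iwaniec and Bombieri--Friedlander--Iwaniec. The key steps would be: (i) remove the inner maximum over $y\leqslant x$ by a completion argument, reducing to a fixed endpoint; (ii) after the Type~I/II decomposition, apply Poisson summation (or a reciprocity argument) in the relevant variable to convert the remaining sums into incomplete Kloosterman sums; and (iii) estimate these sums \emph{on average} over the moduli using the spectral theory of automorphic forms via the Kuznetsov trace formula, thereby extracting a saving beyond the large-sieve range. This strategy is known to succeed in restricted settings: for a fixed residue class and \emph{well-factorable} weights one reaches any $\theta<4/7$, and for smooth moduli the Zhang--Polymath machinery (using the Weil and Deligne bounds for exponential sums) reaches $\theta=1/2+\delta$ for a small explicit $\delta>0$.

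The hard part --- and the reason the conjecture remains open --- is that none of these refinements attains $\theta=1$, and each degrades badly when one simultaneously demands (a) a single fixed residue class, here $a=1$, rather than an average over $a$; (b) arbitrary moduli $d$ rather than smooth or well-factorable ones; and (c) the maximum over $y\leqslant x$ rather than a fixed $y$. The genuine obstruction is the possibility of a conspiracy among the low-lying zeros of Dirichlet $L$-functions, in particular a Siegel zero, which the available exponential-sum and large-sieve inputs cannot unconditionally rule out; notably, even the Generalized Riemann Hypothesis yields only $\theta=1/2$ and does \emph{not} imply Elliott--Halberstam. I therefore do not expect this program to terminate in a proof with present technology: the realistic outcome is a conditional reduction to a strong bilinear hypothesis of Bombieri--Friedlander--Iwaniec type, which is exactly why the main results of this paper are proved \emph{assuming} Conjecture~\ref{conjecture1} rather than establishing it.
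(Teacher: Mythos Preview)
There is nothing to compare here: the statement is the Elliott--Halberstam \emph{conjecture}, and the paper does not attempt to prove it. It is introduced solely as a hypothesis (``Assuming that the Elliott--Halberstam Conjecture is true, then \ldots'') under which Theorem~\ref{thm1} is established. Your write-up correctly recognises this --- you explicitly say that the program cannot be expected to terminate in a proof and that the paper's results are proved \emph{assuming} Conjecture~\ref{conjecture1} --- so in that sense your assessment is accurate and well-informed.

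That said, as a ``proof proposal'' it is by construction not a proof, and no amount of refining the outlined dispersion/Kloosterman/spectral strategy will produce one with current technology; the honest answer to a request for a proof of this statement is simply that none exists. If the intent was to justify \emph{using} the conjecture, the appropriate response is a one-line citation to Elliott and Halberstam rather than a survey of partial progress toward $\theta>1/2$.
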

Our main result is stated as follows.
\begin{theorem}\label{thm1}
Assuming that the Elliott--Halberstam Conjecture is true, then we have
$$\sum_{n\leqslant x}\omega^*(n)^2\sim 2\frac{\zeta(2)\zeta(3)}{\zeta(6)}x\log x$$
as $x\rightarrow \infty$.
\end{theorem}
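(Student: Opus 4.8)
The strategy is to reduce the theorem to a weighted sum over pairs of primes, to evaluate that sum by estimating counting functions of primes in arithmetic progressions under the Elliott--Halberstam Conjecture, and to dispose of the residual ranges by a sieve bound. Concretely, since $\omega^*(n)^2=\sum_{p-1\mid n}\sum_{q-1\mid n}1$, summing over $n\le x$ gives
$$\sum_{n\le x}\omega^*(n)^2=\sum_{\substack{p,q\ \mathrm{prime}\\ [p-1,q-1]\le x}}\Bigl\lfloor\frac{x}{[p-1,q-1]}\Bigr\rfloor=x\,S(x)+O\bigl(\#\{(p,q):[p-1,q-1]\le x\}\bigr),$$
where $S(x):=\sum_{[p-1,q-1]\le x}1/[p-1,q-1]$; the theorem of Murty and Murty quoted above makes the error $O(x)$, so it suffices to prove $S(x)\sim 2\frac{\zeta(2)\zeta(3)}{\zeta(6)}\log x$. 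I would then separate the least common multiple from the greatest common divisor: putting $d=\gcd(p-1,q-1)$, $p-1=da$, $q-1=db$ with $(a,b)=1$, one has $[p-1,q-1]=dab$, hence
$$S(x)=\sum_{d\ge 1}\frac1d\sum_{\substack{(a,b)=1,\ ab\le x/d\\ da+1,\ db+1\ \mathrm{prime}}}\frac1{ab}.$$
Opening the coprimality by Möbius inversion and collecting the resulting moduli, the problem becomes the estimation -- uniformly in a modulus $k$ that runs essentially up to $x$, and in an upper bound $Y$ -- of the weighted prime sums $\sum_{q\le Y,\ q\equiv 1\,(k)}1/(q-1)$, carried out \emph{twice}, nested: once for the variable $a$ (equivalently the prime $p$) and once for $b$ (equivalently $q$).

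For the main term I would invoke the Elliott--Halberstam Conjecture: when $k\le x^{1-\epsilon}$ it gives, after partial summation from the estimate for $\pi(t;k,1)$, an asymptotic of the shape
$$\sum_{q\le Y,\ q\equiv 1\,(k)}\frac1{q-1}=\frac{\log\log Y-\log\log k}{\varphi(k)}+(\text{lower-order main terms})+(\text{error saving a power of }\log x)$$
uniformly in this range. Substituting this into the two nested sums, interchanging the order of summation and evaluating the inner sums, one obtains a main term of the form $\sum_{d}\frac{\mathfrak{g}(d)}{d}\bigl(\log\frac{\log(x/d)}{\log d}\bigr)^{2}$, where $\mathfrak{g}(d)$ packages the arithmetic (singular-series) weight coming from the coprimality and congruence conditions. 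A Landau-type asymptotic for $\sum_{d\le D}\mathfrak{g}(d)/d$ converts the $d$-sum to an integral, turning it into a constant multiple of $\log x$: the integral $\int_0^1\bigl(\log\tfrac{1-s}{s}\bigr)^2\,ds=\pi^2/3=2\zeta(2)$ supplies the analytic factor, the arithmetic factor comes out as $\zeta(3)/\zeta(6)=\prod_p(1+p^{-3})$, and their product is $2\zeta(2)\cdot\frac{\zeta(3)}{\zeta(6)}=2\frac{\zeta(2)\zeta(3)}{\zeta(6)}$; the identification of the arithmetic constant rests on the Euler-product identity $\frac{\zeta(2)\zeta(3)}{\zeta(6)}=\prod_p\bigl(1+\frac1{p(p-1)}\bigr)$ (equivalently Landau's $\sum_{n\le x}1/\varphi(n)\sim\frac{\zeta(2)\zeta(3)}{\zeta(6)}\log x$).

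It remains to handle the moduli $k>x^{1-\epsilon}$, which lie outside the range of the Elliott--Halberstam Conjecture; here the Brun--Titchmarsh inequality is of no use, since the factor $\log(Y/k)$ in its denominator degenerates when $k$ is close to $Y$. Instead I would bound the sums $\sum_{q\le Y,\ q\equiv 1\,(k)}1/(q-1)$ by an upper-bound sieve estimate valid uniformly all the way to $k$ near $Y$, and check that the total contribution of this range to $S(x)$ is $O_\epsilon(\log x)$ with implied constant $\to 0$ as $\epsilon\to 0$; letting $\epsilon\to 0$ after $x\to\infty$ then finishes the proof. I expect the main obstacles to be threefold: tracking all the local factors so that they assemble to exactly $\zeta(2)\zeta(3)/\zeta(6)$; controlling the interplay of the two coupled constraints -- the divisibility conditions that confine $p$ and $q$ to arithmetic progressions, and the bound on $[p-1,q-1]$ -- which is precisely what forces the gcd-parametrization and the two-fold nesting; and securing enough uniformity in the sieve step to cover exactly the moduli that Elliott--Halberstam cannot.
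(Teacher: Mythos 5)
Your overall architecture does match the paper's: reduce to the weighted pair-sum $S(x)=\sum 1/[p-1,q-1]$ (with the Murty--Murty bound supplying the $O(x)$), apply Elliott--Halberstam for moduli up to $x^{1-\epsilon}$, control the remaining range by an upper-bound sieve because Brun--Titchmarsh degenerates there, and let $\epsilon\to 0$ last. But the combinatorial core is different, and the difference matters. The paper's pivotal tool is the identity $\gcd(p-1,q-1)=\sum_{m\mid \gcd(p-1,q-1)}\varphi(m)$, which turns $S(x)$ into $\sum_m\varphi(m)\bigl(\sum_{p\le x,\,p\equiv 1\ (m)}\frac{1}{p-1}\bigr)^2$ --- a single sum of a \emph{perfect square}, so that EH is applied to one progression at a time and Landau's $\sum_{m\le y}1/\varphi(m)\sim\frac{\zeta(2)\zeta(3)}{\zeta(6)}\log y$ delivers the constant with no singular series to track. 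Your $\gcd$-parametrization $p-1=da$, $q-1=db$, $(a,b)=1$ keeps $p$ and $q$ coupled (through the coprimality and the hyperbola $ab\le x/d$), and the main-term evaluation you sketch is asserted rather than derived. In fact the intermediate claims look wrong as stated: the local densities for ``$da+1$ and $db+1$ both prime, $(a,b)=1$'' involve twin-prime-type factors $1-\frac{1}{(p-1)^2}$ and vanish for odd $d$ (since $\gcd(p-1,q-1)$ is even for odd primes $p,q$), so the arithmetic weight is not $\prod_p(1+p^{-3})$ supported on all $d$; likewise the hyperbola constraint does not produce the pointwise factor $\bigl(\log\frac{\log(x/d)}{\log d}\bigr)^2$. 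The factorization $2\zeta(2)\cdot\zeta(3)/\zeta(6)$ appears to be reverse-engineered from the target constant rather than computed. A correct computation along your lines must of course return $2\zeta(2)\zeta(3)/\zeta(6)$, but nothing in the proposal establishes it.

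The second, more structural gap is in the EH step. The conjecture cannot be applied ``uniformly in $Y$'' down to $Y$ near the modulus: for $Y\le k^{1+\epsilon}$ the modulus $k$ exceeds $Y^{1-\epsilon'}$ and EH gives no saving at height $Y$, yet the weight $1/(q-1)$ places substantial mass precisely on the smallest primes $q\equiv 1\pmod k$. One must therefore excise, for \emph{every} modulus $k\le x^{1-\epsilon}$, the contribution of primes $p\equiv 1\pmod k$ with $p\le k^{1+\epsilon}$ --- not only the moduli $k>x^{1-\epsilon}$ that you flag. This excision is where most of the paper's technical work lies (its Propositions 3.2 and 3.3, proved via the Brun sieve together with a lemma of Ding--Zhao bounding $\sum_{\ell_1<\ell_2<z}\frac{1}{\ell_1\ell_2}\prod_{p\mid \ell_1\ell_2(\ell_2-\ell_1)}(1+\frac1p)^2\ll(\log z)^2$), and your proposal does not recognize the need for it. Until both the singular-series identification and this near-diagonal excision are actually carried out, the argument is a plausible program rather than a proof.
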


\section{Auxiliary results}
Throughout our paper, the number $x$ is supposed to be sufficiently large whereas $\epsilon$ is a given number which was supposed to be sufficiently small. The symbol $p$ will always be a prime.
In this section, we provide some preliminary results to be used later.

We need firstly the following asymptotic formulae of Landau \cite{La}.
\begin{lemma} \label{landau}
For  positive number $y$ we have
$$\sum_{m\leqslant y}\frac1{\varphi(m)}=\frac{\zeta(2)\zeta(3)}{\zeta(6)}\log y+O(1)$$
and
$$\sum_{m\leqslant y}\frac m{\varphi(m)}=\frac{\zeta(2)\zeta(3)}{\zeta(6)}y+O(\log y).$$
\end{lemma}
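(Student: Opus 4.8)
The plan is to prove the second (arithmetically cleaner) estimate first by a Dirichlet-convolution argument, and then to deduce the first from it by partial summation. First I would record the elementary identity
$$\frac{n}{\varphi(n)}=\prod_{p\mid n}\left(1+\frac{1}{p-1}\right)=\sum_{d\mid n}\frac{\mu^2(d)}{\varphi(d)},$$
valid because the last sum is the summatory function of the multiplicative weight $\mu^2/\varphi$, which is supported on squarefree $d$ and takes the value $1/(p-1)$ at each prime. Inserting this into $\sum_{m\leqslant y}m/\varphi(m)$ and exchanging the order of summation over $d\mid m$ gives
$$\sum_{m\leqslant y}\frac{m}{\varphi(m)}=\sum_{d\leqslant y}\frac{\mu^2(d)}{\varphi(d)}\left\lfloor\frac{y}{d}\right\rfloor=y\sum_{d\leqslant y}\frac{\mu^2(d)}{d\,\varphi(d)}+O\!\left(\sum_{d\leqslant y}\frac{\mu^2(d)}{\varphi(d)}\right),$$
where the error term is $O(\log y)$ by the standard bound $\sum_{d\leqslant y}\mu^2(d)/\varphi(d)\ll\log y$ (which itself follows from Mertens' theorem).

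Next I would complete the remaining series to infinity. Using $d/\varphi(d)\ll\log\log d$, the tail satisfies $\sum_{d>y}\mu^2(d)/(d\varphi(d))\ll(\log\log y)/y$, so that $y$ times it is again $O(\log y)$, and the completed sum is the Euler product $\sum_{d\geqslant1}\mu^2(d)/(d\varphi(d))=\prod_p\left(1+\frac{1}{p(p-1)}\right)$. The one genuine computation is to identify this product with $\zeta(2)\zeta(3)/\zeta(6)$; writing $x=1/p$, the local factor of $\zeta(2)\zeta(3)/\zeta(6)$ telescopes as
$$\frac{1-x^6}{(1-x^2)(1-x^3)}=\frac{1+x^3}{1-x^2}=\frac{1-x+x^2}{1-x}=1+\frac{x^2}{1-x}=1+\frac{1}{p(p-1)},$$
matching the product factor by factor. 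This proves $\sum_{m\leqslant y}m/\varphi(m)=\frac{\zeta(2)\zeta(3)}{\zeta(6)}y+O(\log y)$.

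Finally, writing $C=\zeta(2)\zeta(3)/\zeta(6)$ and $A(t)=\sum_{m\leqslant t}m/\varphi(m)=Ct+O(\log t)$, I would apply Abel summation to $\sum_{m\leqslant y}\varphi(m)^{-1}=\sum_{m\leqslant y}(m/\varphi(m))\,m^{-1}$ to obtain
$$\sum_{m\leqslant y}\frac{1}{\varphi(m)}=\frac{A(y)}{y}+\int_1^y\frac{A(t)}{t^2}\,dt=C\log y+O(1),$$
where the contribution of the error term $O(\log t)$ stays bounded because $\int_1^\infty t^{-2}\log t\,dt$ converges. The only real obstacle is the Euler-product evaluation, and the telescoping displayed above disposes of it cleanly; everything else reduces to routine tail and error estimates.
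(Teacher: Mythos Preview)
Your argument is correct. The convolution identity $m/\varphi(m)=\sum_{d\mid m}\mu^2(d)/\varphi(d)$, the hyperbola-style swap of summation, the tail estimate, the Euler-product evaluation
\[
\prod_p\Bigl(1+\frac{1}{p(p-1)}\Bigr)=\frac{\zeta(2)\zeta(3)}{\zeta(6)},
\]
and the final Abel summation are all sound. One small point worth making explicit to avoid any appearance of circularity: when you invoke $\sum_{d\leqslant y}\mu^2(d)/\varphi(d)\ll\log y$ ``by Mertens'', the cleanest justification is
\[
\sum_{d\leqslant y}\frac{\mu^2(d)}{\varphi(d)}\;\leqslant\;\sum_{\substack{d\ \mathrm{squarefree}\\ p\mid d\Rightarrow p\leqslant y}}\prod_{p\mid d}\frac{1}{p-1}
=\prod_{p\leqslant y}\Bigl(1+\frac{1}{p-1}\Bigr)
=\prod_{p\leqslant y}\Bigl(1-\frac{1}{p}\Bigr)^{-1}\ll\log y,
\]
which uses nothing beyond Mertens' product formula and is independent of the lemma being proved.

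As for comparison with the paper: the paper does not prove this lemma at all; it simply quotes the two asymptotics as classical results of Landau (1900) and moves on. So your proposal is not an alternative proof but a self-contained one where the paper offers none. What you have written is in fact essentially Landau's original argument, and it would serve perfectly well if the paper wanted to be self-contained on this point.
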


A classical inequality below involving primes in arithmetic progressions will be used frequently.

\begin{lemma} [Brun--Titchmarsh inequality \cite{MV}]\label{lem1}
Let $x$ be a positive real number, and let $k,a$ be relatively prime positive integers. Then
$$
\pi(x;k,a)\leqslant\frac{2x}{\varphi(k)\log(x/k)},
$$
provided that $x>k$.
\end{lemma}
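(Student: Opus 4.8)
The plan is to recognise the stated bound as the sharp Brun--Titchmarsh inequality of Montgomery and Vaughan and to derive it by treating the primes $p\leqslant x$ with $p\equiv a\pmod{k}$ as the survivors of a sifting process. Writing every such prime, apart from the at most $\pi(z)\leqslant z$ primes below a sieve level $z$, in the form $p=a+km$ with $0\leqslant m\leqslant(x-a)/k$, one is led to sieve the progression $\mathcal{A}=\{n\leqslant x:\ n\equiv a\pmod{k}\}$ by deleting, for each prime $q\leqslant z$ with $q\nmid k$, the single residue class on which $q\mid n$; since $(a,k)=1$ this class is well defined modulo $q$, so the sieve problem has dimension one. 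I would attack it with Selberg's $\Lambda^2$ upper-bound sieve, which is the tool that yields the correct constant $2$ in this one-dimensional situation (a direct application of the arithmetic large sieve would lose a further factor here).

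For the Selberg sieve I would first record the local densities: for squarefree $d$ with $(d,k)=1$, the Chinese remainder theorem gives $|\mathcal{A}_d|=x/(kd)+R_d$ with $|R_d|\leqslant1$, so the multiplicative density is $1/d$ and the associated Selberg weight is $g(d)=\mu^2(d)/\varphi(d)$. The main term of the sieve is then $X/G(z)$ with $X=x/k$ and
$$G(z)=\sum_{\substack{d\leqslant z\\(d,k)=1}}\frac{\mu^2(d)}{\varphi(d)}.$$
The arithmetic heart of the estimate is the lower bound $G(z)\geqslant\frac{\varphi(k)}{k}\log z$. I would obtain it from the identity $\frac{1}{\varphi(d)}=\frac1d\prod_{p\mid d}(1-1/p)^{-1}=\sum_{\mathrm{rad}(m)\mid d}\frac{1}{dm}$, valid for squarefree $d$, which upon summation lets each integer $n\leqslant z$ coprime to $k$ be recovered as $dm$ with $d=\mathrm{rad}(n)$, so that $G(z)\geqslant\sum_{n\leqslant z,\,(n,k)=1}1/n\geqslant\frac{\varphi(k)}{k}\log z$.

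With these ingredients the endgame is an optimisation. Taking the sieve level $z=(x/k)^{1/2}$ (shaved by a small power of $\log$ so as to absorb the remainder) gives $\log z=\tfrac12\log(x/k)$, whence the main term becomes
$$\frac{X}{G(z)}\leqslant\frac{x/k}{\frac{\varphi(k)}{k}\cdot\frac12\log(x/k)}=\frac{2x}{\varphi(k)\log(x/k)}.$$
The Selberg remainder $\sum_{d_1,d_2\leqslant z}|R_{[d_1,d_2]}|\ll z^2(\log z)^2$ and the $\pi(z)\leqslant z$ discarded small primes are both of lower order once $z$ is chosen slightly below $(x/k)^{1/2}$, and adding them back recovers the advertised shape of the bound.

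The main obstacle is not the numerology but the \emph{uniformity}: the route above yields $\pi(x;k,a)\leqslant\frac{2x}{\varphi(k)\log(x/k)}\bigl(1+o(1)\bigr)$ only in the range $x/k\to\infty$, because both the Selberg remainder and the secondary terms hidden in $G(z)\geqslant\frac{\varphi(k)}{k}\log z$ each cost a factor $1+o(1)$. Upgrading this to the clean inequality with constant exactly $2$, valid for every $x>k$, with no error term and fully uniformly in $k$, is precisely the delicate contribution of Montgomery and Vaughan \cite{MV}; it is carried out through a refined large-sieve argument, using the sharp additive large sieve inequality with optimal constant $N+Q^2$ for the $Q^{-2}$-spaced Farey points, arranged so as to circumvent every lower-order loss. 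I would follow that refinement to remove the slack, noting that the regime in which $x$ is close to $k$ is vacuous, since there the right-hand side is already enormous.
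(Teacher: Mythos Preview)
The paper does not prove this lemma at all: it is quoted verbatim from Montgomery--Vaughan \cite{MV} and used as a black box, so there is no ``paper's own proof'' to compare against. Your sketch is a faithful outline of how the result is actually established in the literature---Selberg's $\Lambda^2$ sieve for the main shape, the elementary lower bound $G(z)\geqslant\frac{\varphi(k)}{k}\log z$, and then the Montgomery--Vaughan large-sieve refinement to eliminate the $1+o(1)$ and obtain the clean constant $2$ uniformly---and you have correctly located the one genuinely delicate point.

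It is worth remarking that for every application in the present paper the sharp constant is irrelevant: Lemma~\ref{lem1} is invoked only inside $O(\cdot)$ estimates (Lemma~\ref{lem}, the bounds on $S_2(x)$ and $S_3(x)$, and Proposition~\ref{pro3}), where any fixed constant in place of $2$ would do. Thus the elementary Selberg-sieve version you describe, yielding $\pi(x;k,a)\leqslant\frac{(2+o(1))x}{\varphi(k)\log(x/k)}$ as $x/k\to\infty$, already suffices for the paper, and the appeal to the full Montgomery--Vaughan theorem is a convenience rather than a necessity.
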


The following result is obtained directly from Lemma \ref{lem1}.

\begin{lemma} \label{lem}
Let $x$ be a positive real number, and let $m,a$ be relatively prime positive integers. Then
$$\int_{2}^{x}\frac{\pi(t;m,1)}{(t-1)^2}dt\ll \frac{\log(\log x-\log m)}{\varphi(m)}$$
provided that $x>6m$, where the implied constant is absolute.
\end{lemma}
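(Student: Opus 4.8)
The plan is to split the integral at $t=6m$ --- legitimate since $x>6m$ --- and to treat the two ranges by entirely different means.

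\emph{The short range $2\leqslant t\leqslant 6m$.} Here the Brun--Titchmarsh bound of Lemma \ref{lem1} is of no use, since the factor $\log(t/m)$ it produces in the denominator need not be bounded away from $0$. Instead I would use only the trivial count $\pi(t;m,1)\leqslant (t-1)/m$ together with the observation that a prime $p\equiv 1\pmod m$ satisfies $p\geqslant m+1$, so that $\pi(t;m,1)=0$ for $t\leqslant m$. This gives
$$\int_2^{6m}\frac{\pi(t;m,1)}{(t-1)^2}\,dt\leqslant\frac1m\int_{\max(2,m)}^{6m}\frac{dt}{t-1}\ll\frac1m\leqslant\frac1{\varphi(m)},$$
the middle integral being $\ll 1$ because the endpoints have bounded ratio. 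Since $x>6m$ forces $\log x-\log m>\log 6>1$, the quantity $\log(\log x-\log m)$ is bounded below by the absolute constant $\log\log 6>0$, so this contribution is already $\ll\log(\log x-\log m)/\varphi(m)$.

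\emph{The long range $6m\leqslant t\leqslant x$.} Now $t>m$, so Lemma \ref{lem1} applies and gives $\pi(t;m,1)\leqslant 2t/(\varphi(m)\log(t/m))$, and moreover $t/m\geqslant 6$ keeps $\log(t/m)$ bounded away from $0$. Replacing $t/(t-1)^2$ by $O(1/t)$ via $t-1\geqslant t/2$, the integral over this range is
$$\ll\frac1{\varphi(m)}\int_{6m}^{x}\frac{dt}{t\log(t/m)}=\frac1{\varphi(m)}\int_{\log 6}^{\log x-\log m}\frac{du}{u}=\frac{\log(\log x-\log m)-\log\log 6}{\varphi(m)}\leqslant\frac{\log(\log x-\log m)}{\varphi(m)},$$
where I have substituted $u=\log t-\log m$. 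Combining the two ranges yields the lemma.

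I do not anticipate a genuine obstacle; the one point requiring care is not to invoke Brun--Titchmarsh near $t=m$, but instead to exploit the boundedness (and the vanishing for $t\leqslant m$) of $\pi(t;m,1)$ on the short range so as to recover the saving $1/\varphi(m)$ rather than a mere $O(1)$, and to note that the substitution in the long range produces exactly $\log\log(x/m)$ up to a bounded loss --- both features being what make the implied constant absolute.
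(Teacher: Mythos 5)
Your proof is correct and follows essentially the same route as the paper: the trivial bound $\pi(t;m,1)\leqslant (t-1)/m$ (and its vanishing for $t\leqslant m$) near $t=m$, Brun--Titchmarsh on the remaining range, and absorption of the $1/m$ term using $\log(\log x-\log m)\gg 1$. The only cosmetic difference is that the paper splits the integral into three pieces (treating $[x/2,x]$ trivially as well), whereas your two-piece split at $t=6m$ suffices for the form of Brun--Titchmarsh quoted in Lemma \ref{lem1}.
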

\begin{proof}
Trivially, we have
$$
\int_{2}^{x}\frac{\pi(t;m,1)}{(t-1)^2}dt\ll\int_{m}^{x}\frac{\pi(t;m,1)}{t^2}dt.
$$
We separate the integral above into the following three parts
$$\int_{m}^{x}\frac{\pi(t;m,1)}{t^2}dt=\left(\int_{m}^{em}+\int_{em}^{x/2}+\int_{x/2}^{x}\right)\frac{\pi(t;m,1)}{t^2}dt.$$
Using the Brun--Titchmarsh inequality for the second part as well as the trivial estimates for the other two parts, we deduce that
\begin{align}\label{eq3-4}
\int_{2}^{x}\frac{\pi(t;m,1)}{(t-1)^2}dt&\ll \int_{m}^{em}\frac{dt}{mt}+\int_{em}^{x/2}\frac{dt}{\varphi(m)t\log (t/m)}+\int_{x/2}^{x}\frac{dt}{mt}\nonumber\\
&\ll \frac{1}{m}+\frac{\log(\log x-\log m)}{\varphi(m)}\nonumber\\
&\ll \frac{\log(\log x-\log m)}{\varphi(m)}.
\end{align}

This completes the proof of Lemma \ref{lem}.
\end{proof}

The whole proof of our article is based on the following substantial identity of Murty and Murty \cite[equation (4.10)]{MM}. 
\begin{lemma}\label{lem3-1} For positive number $x$ we have
\begin{align*}
\sum_{n\leqslant x}\omega^*(n)^2=x\sum_{m\leqslant x}\varphi(m)\bigg(\sum_{\substack{p\leqslant x\\p\equiv 1\!\!\!\pmod{m}}}\frac{1}{p-1}\bigg)^2+O(x).
\end{align*}
\end{lemma}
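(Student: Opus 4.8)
The plan is to isolate the combinatorial content of $\omega^*(n)^2$ exactly, and then to reorganise the arithmetic sum that remains so that the Brun--Titchmarsh inequality can absorb the error. Since $\omega^*(n)=\sum_{p-1\mid n}1$ we have $\omega^*(n)^2=\#\{(p,q):p,q\text{ prime},\ [p-1,q-1]\mid n\}$, so summing over $n\leqslant x$ and interchanging the order of summation,
$$\sum_{n\leqslant x}\omega^*(n)^2=\sum_{p,q}\#\{n\leqslant x:[p-1,q-1]\mid n\}=\sum_{p,q}\left\lfloor\frac{x}{[p-1,q-1]}\right\rfloor ,$$
and only the pairs with $[p-1,q-1]\leqslant x$ (in particular $p,q\leqslant x+1$) contribute. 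Writing $\lfloor x/[p-1,q-1]\rfloor=x/[p-1,q-1]+O(1)$ and invoking the Murty--Murty estimate $\#\{(p,q):[p-1,q-1]\leqslant x\}\ll x$ recalled in the introduction, this gives
$$\sum_{n\leqslant x}\omega^*(n)^2=x\sum_{[p-1,q-1]\leqslant x}\frac{1}{[p-1,q-1]}+O(x).$$

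Next I would expand the reciprocal of the least common multiple via $\frac{1}{[p-1,q-1]}=\frac{(p-1,q-1)}{(p-1)(q-1)}=\frac{1}{(p-1)(q-1)}\sum_{m\mid (p-1,q-1)}\varphi(m)$ and interchange the order of summation (all terms are nonnegative), observing that $m\mid p-1$ together with $[p-1,q-1]\leqslant x$ forces $m\leqslant x$. The right-hand side then becomes $x\sum_{m\leqslant x}\varphi(m)\,T_m$, where $T_m$ is the sum of $1/((p-1)(q-1))$ over primes $p\equiv q\equiv 1\pmod m$ subject to $[p-1,q-1]\leqslant x$. Were the coupled condition $[p-1,q-1]\leqslant x$ replaced by the two separate conditions $p\leqslant x$ and $q\leqslant x$, the sum $T_m$ would factor as $\bigl(\sum_{p\leqslant x,\ p\equiv 1\pmod m}1/(p-1)\bigr)^2$, and the restriction $m\leqslant x$ would become automatic (once more because $m\mid p-1\leqslant x-1$); this is exactly the shape asserted in Lemma~\ref{lem3-1}.

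Everything therefore reduces to estimating the discrepancy between the two ranges of summation, and this is the step I expect to be the main obstacle. On one side, the pairs with $[p-1,q-1]\leqslant x$ but $\max(p,q)=x+1$ contribute, after carrying out $\sum_{m\mid n}\varphi(m)=n$, only $O(\tau(x))=o(x)$. On the other side, the pairs with $p,q\leqslant x$ but $[p-1,q-1]>x$ contribute $x\sum_{p,q\leqslant x,\ [p-1,q-1]>x}1/[p-1,q-1]$ to the main term, so the crux is to prove
$$\sum_{\substack{p,q\leqslant x\\ [p-1,q-1]>x}}\frac{1}{[p-1,q-1]}=O(1).$$
Here I would set $d=(p-1,q-1)$ and write $p-1=da$, $q-1=db$ with $(a,b)=1$, so $[p-1,q-1]=dab$ and the hypotheses become $da,db\leqslant x-1$ and $dab>x$; since the last of these forces $\max(a,b)\geqslant\sqrt{x/d}$, the larger of $p,q$ is confined to a window of the form $(\sqrt{dx},x]$ in the progression $1\pmod d$, on which the Brun--Titchmarsh inequality (Lemma~\ref{lem1}), combined with partial summation as in the integrated bound of Lemma~\ref{lem}, contributes a factor $\ll 1/\varphi(d)$ with an absolute implied constant; after a similar treatment of the companion variable, summing over $d$ then leaves the required $O(1)$. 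Substituting this estimate back through the chain above completes the proof of Lemma~\ref{lem3-1}.
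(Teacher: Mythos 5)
First, a remark on the comparison itself: the paper does not prove Lemma \ref{lem3-1} at all — it is imported verbatim from Murty and Murty \cite[equation (4.10)]{MM} — so there is no internal proof to measure your argument against. Your opening reductions are correct and are surely close to how such an identity would have to be derived: the exact identity $\sum_{n\le x}\omega^*(n)^2=\sum_{p,q}\lfloor x/[p-1,q-1]\rfloor$, the Murty--Murty bound $\#\{(p,q):[p-1,q-1]\le x\}\ll x$ to absorb the rounding errors, and Gauss's identity $\sum_{m\mid k}\varphi(m)=k$ to pass to the $\varphi$-weighted form. Since
$$\sum_{m\le x}\varphi(m)\bigg(\sum_{\substack{p\le x\\ p\equiv1\!\!\!\pmod m}}\frac1{p-1}\bigg)^2=\sum_{p,q\le x}\frac{(p-1,q-1)}{(p-1)(q-1)}=\sum_{p,q\le x}\frac{1}{[p-1,q-1]}$$
exactly, you have correctly located the entire content of the lemma in the single estimate $T(x):=\sum_{p,q\le x,\,[p-1,q-1]>x}1/[p-1,q-1]=O(1)$.

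The gap is in that last step, and it is fatal to the argument as sketched. Writing $d=(p-1,q-1)$, $p-1=da$, $q-1=db$, each summand is $\frac{1}{dab}=\frac{d}{(p-1)(q-1)}$. Even granting your (correct) observation that Brun--Titchmarsh plus partial summation over the window $(\sqrt{dx},\,x]$ gives $\sum 1/(p-1)\ll 1/\varphi(d)$, and an analogous factor $\ll 1/\varphi(d)$ for the companion variable, the leftover factor $d$ in the numerator leaves you with $\sum_{d\le x} d/\varphi(d)^2$, and since $d/\varphi(d)^2\ge 1/d$ this sum is $\asymp\log x$, not $O(1)$. So the method yields at best $T(x)\ll\log x$, which is useless here because the main term is itself only of size $\log x$. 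Worse, the target estimate appears to be genuinely of order $\log x$ rather than $O(1)$: for any even $d\le x^{2/5}$, every pair of primes $p,q\in(x^{7/10},x^{3/4})$ with $(p-1,q-1)=d$ automatically satisfies $[p-1,q-1]>x^{1.4}/x^{0.4}=x$, and the expected contribution of each such $d$ to $T(x)$ is $\asymp d\cdot\big(\log\tfrac{15}{14}\big)^2/\varphi(d)^2$ (Bombieri--Vinogradov together with a lower-bound sieve for the coprimality of $(p-1)/d$ and $(q-1)/d$ makes this rigorous on average over $d\le x^{3/10}$), so that $T(x)\gg\log x$. Since your chain of reductions up to that point consists of exact equivalences, this means the identity cannot be completed along this route; it also suggests that the error term $O(x)$ in the quoted equation (4.10) deserves independent verification against \cite{MM} before being relied upon.
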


The next lemma is deduced from partial summations and elementary estimates.
\begin{lemma}\label{lem3-3} For positive number $x\ge 2$ we have
\begin{align*}
\sum_{n\leqslant x}\omega^*(n)^2=x\sum_{1<m\leqslant x}\varphi(m)\bigg(\int_{2}^{x}\frac{\pi(t;m,1)}{(t-1)^2}dt\bigg)^2
+O\left(x(\log\log x)^3\right).
\end{align*}
\end{lemma}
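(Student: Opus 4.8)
The plan is to pass from the discrete inner sum $\sum_{p\le x,\ p\equiv 1(m)}\frac1{p-1}$ to the integral $\int_2^x\frac{\pi(t;m,1)}{(t-1)^2}dt$ by partial summation, and then to control the error terms uniformly in $m$ using Lemma \ref{lem}. Write $S_m(x)=\sum_{p\le x,\ p\equiv 1(m)}\frac1{p-1}$. By Abel summation against the counting function $\pi(t;m,1)$ one gets
$$S_m(x)=\frac{\pi(x;m,1)}{x-1}+\int_2^x\frac{\pi(t;m,1)}{(t-1)^2}dt.$$
So $S_m(x)=I_m(x)+E_m(x)$ with $I_m(x):=\int_2^x\frac{\pi(t;m,1)}{(t-1)^2}dt$ and $E_m(x):=\pi(x;m,1)/(x-1)$, the latter being $\ll x/(\varphi(m)x\log(x/m))\ll 1/\varphi(m)$ by Brun--Titchmarsh when $x>2m$ (and $S_m(x)=E_m(x)=0$ once $m>x-1$, so only $m\le x$ contribute). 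Then $S_m(x)^2=I_m(x)^2+2I_m(x)E_m(x)+E_m(x)^2$, and after multiplying by $\varphi(m)$ and summing over $1<m\le x$ we must show that the two cross/error contributions are $O((\log\log x)^3)$ each, so that the main term $x\sum_{1<m\le x}\varphi(m)I_m(x)^2$ matches Lemma \ref{lem3-1} up to the stated error. The $m=1$ term contributes $O(1)\cdot x$ to Lemma \ref{lem3-1}, absorbed in its $O(x)$.

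The first step is therefore to bound $\sum_{1<m\le x}\varphi(m)E_m(x)^2\ll\sum_{1<m\le x}\varphi(m)\cdot\frac1{\varphi(m)^2}=\sum_{1<m\le x}\frac1{\varphi(m)}\ll\log x$ by Lemma \ref{landau}; this is far smaller than $(\log\log x)^3$ — wait, it is larger, so one must be more careful. In fact for $m>x/2$ the Brun--Titchmarsh bound degenerates, but there $\pi(x;m,1)\le 1+x/m\ll 1$ trivially, so $E_m(x)\ll 1/m$ and $\sum_{x/2<m\le x}\varphi(m)E_m(x)^2\ll\sum_{x/2<m\le x}\varphi(m)/m^2\ll 1$. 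For $m\le x/2$ one has the sharper $E_m(x)\ll 1/(\varphi(m)\log(x/m))$; splitting according to whether $m\le\sqrt x$ or $\sqrt x<m\le x/2$ gives $\log(x/m)\gg\log x$ in the first range, contributing $\ll\frac1{(\log x)^2}\sum_{m\le\sqrt x}\frac1{\varphi(m)}\ll\frac1{\log x}$, and in the second range we only have $E_m(x)\ll 1/\varphi(m)$ but now $\sum_{\sqrt x<m\le x/2}1/\varphi(m)\ll\log x$ too, which still overshoots. The honest fix is to use the cruder trivial bound $\pi(x;m,1)\le x/m$ for all $m$: then $E_m(x)\ll 1/m$, so $\sum_{1<m\le x}\varphi(m)E_m(x)^2\ll\sum_{1<m\le x}\varphi(m)/m^2\ll\log x$. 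Since $\log x$ is $o((\log\log x)^3)$ is false — I should instead aim only for the bound $O(x\log x\cdot(\text{something small}))$; but the target error is $O(x(\log\log x)^3)$, so in fact I need $\sum\varphi(m)E_m(x)^2=O((\log\log x)^3)$, which forces the Brun--Titchmarsh savings. The cleanest route: for $2m<x$ use $E_m(x)\ll 1/(\varphi(m)\log(x/m))$; split $m\le x^{1-\delta}$ versus $x^{1-\delta}<m<x$. In the first range $\log(x/m)\ge\delta\log x$, giving $\ll(\delta\log x)^{-2}\log x=o(1)$; in the second range and for $x/2\le m\le x$, use $\pi(x;m,1)\le 1+x/m\le 3$, so $E_m(x)\ll 1/x\cdot$(bounded)$\ll x^{-\delta}$... this is getting delicate.

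Let me restructure the key step. Instead of bounding $E_m(x)^2$ alone, note that the decisive quantity is the full $\sum_{1<m\le x}\varphi(m)\big(S_m(x)^2-I_m(x)^2\big)=\sum_{1<m\le x}\varphi(m)E_m(x)\big(S_m(x)+I_m(x)\big)$. By Cauchy--Schwarz this is at most $\big(\sum\varphi(m)E_m(x)^2\big)^{1/2}\big(\sum\varphi(m)(S_m(x)+I_m(x))^2\big)^{1/2}$. The second factor is $\ll\big(\sum\varphi(m)S_m(x)^2\big)^{1/2}$, and $x\sum\varphi(m)S_m(x)^2=\sum_{n\le x}\omega^*(n)^2+O(x)\ll x\log x$ by Lemma \ref{lem3-1} together with the known upper bound of Murty--Murty; hence that factor is $\ll(\log x)^{1/2}$. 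For the first factor, using $E_m(x)=\pi(x;m,1)/(x-1)$ and summing $\sum_{1<m\le x}\varphi(m)\pi(x;m,1)^2$: this counts (with weight $\varphi(m)$) pairs of primes $p\equiv q\equiv 1\pmod m$ with $p,q\le x$, and reversing the order of summation gives $\sum_{p,q\le x}\sum_{m\mid\gcd(p-1,q-1)}\varphi(m)\le\sum_{p,q\le x}\sigma(\gcd(p-1,q-1))\ll\sum_{p,q\le x}\gcd(p-1,q-1)\log\log(3\gcd)$, and by the Murty--Murty / Erd\H os--Prachar circle of estimates $\sum_{p,q\le x}\gcd(p-1,q-1)\ll x\cdot(\text{polylog})$ — precisely one uses that $\sum_{p,q\le x}[p-1,q-1]^{-1}(p-1)(q-1)$-type sums are $O(x^2)$, which after dividing by $(x-1)^2$ yields $\sum\varphi(m)E_m(x)^2\ll(\log\log x)^{O(1)}$. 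The main obstacle is exactly this last bound — getting $\sum_{1<m\le x}\varphi(m)\pi(x;m,1)^2\ll x(\log\log x)^{5}$ or so with an honest power of $\log\log x$ — and the paper presumably does it via a Brun--Titchmarsh-based double-sum estimate over $m$ (applying Lemma \ref{lem1} to both primes and then summing $\sum_m\varphi(m)/(\varphi(m)^2\log^2)\cdot x^2/\cdots$); once that is in hand, multiplying the two Cauchy--Schwarz factors gives $\ll(\log\log x)^{O(1)}(\log x)^{1/2}\cdot(\log\log x)^{?}$ which, after being multiplied by $x$, lands inside $O(x(\log\log x)^3)$ provided the bookkeeping of exponents is arranged correctly; I would carry the exponents through carefully at the end, and if $(\log x)^{1/2}$ survives, replace the Cauchy--Schwarz step by the termwise bound $\varphi(m)E_m(x)(S_m(x)+I_m(x))\ll\varphi(m)\cdot\frac1{\varphi(m)\log(x/m)}\cdot\frac{\log(\log x-\log m)}{\varphi(m)}$ (using Lemma \ref{lem} for $I_m$ and $S_m\ll I_m+E_m$), summing which against $\sum_m 1/(\varphi(m))\cdot$(loglog)/$\log(x/m)$ over the dyadic ranges of $m$ indeed produces only powers of $\log\log x$. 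That termwise route, dovetailed with the trivial bound in the range $x/2<m\le x$, is the version I expect to be cleanest, and it is the step I would spend the most care on.
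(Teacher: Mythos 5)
Your opening is exactly the paper's: start from Lemma \ref{lem3-1}, apply Abel summation to write $S_m(x)=I_m(x)+E_m(x)$ with $E_m(x)=\pi(x;m,1)/(x-1)$, expand the square, and reduce the lemma to showing that the contributions of $\varphi(m)E_m(x)^2$ and $\varphi(m)E_m(x)I_m(x)$ are $O((\log\log x)^3)$ after summing over $m$. Your final sentence also lands on the paper's actual mechanism: bound $E_m$ by Brun--Titchmarsh, bound $I_m$ by Lemma \ref{lem}, use $\varphi(m)\gg m/\log\log m$, and sum dyadically, with trivial bounds when $m$ is close to $x$. So the route is right in outline.

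The problem is that the decisive step is never executed, and it is the entire content of the lemma. Concretely, what has to be proved (and what the paper proves) is
\begin{equation*}
\sum_{m\leqslant x/16}\varphi(m)\,E_m(x)\,I_m(x)\ \ll\ \log\log x\sum_{m\leqslant x/16}\frac{\log\log m}{m\,\log(x/m)}\ \ll\ (\log\log x)^3 ,
\end{equation*}
where the last inequality comes from summing over dyadic blocks $x/2^{k+1}<m\leqslant x/2^k$ (each block contributes $O(\log\log x/k)$, and there are $O(\log x)$ values of $k$, giving $(\log\log x)^2$ in total); together with the trivial estimate $\pi(t;m,1)\leqslant 1+t/m$ on the complementary range $x/16<m\leqslant x$, where neither Brun--Titchmarsh nor Lemma \ref{lem} is available (note Lemma \ref{lem} requires $x>6m$, and Brun--Titchmarsh degenerates as $m\to x$; your cutoff at $x/2$ is not where the paper, or the estimate, actually needs care --- for the $E_m^2$ term the paper cuts at $x/\log x$ precisely so that the leftover range contributes only $\sum_{x/\log x<m\leqslant x}1/m\ll\log\log x$). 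You flag this computation as ``the step I would spend the most care on'' and leave the exponent bookkeeping unresolved, which means the $O(x(\log\log x)^3)$ error term is asserted rather than derived. In addition, two of the three routes you develop at length are dead ends and would not be repaired by more care: the Cauchy--Schwarz version necessarily leaves a factor $(\log x)^{1/2}$ (since $\sum_m\varphi(m)S_m(x)^2\asymp\log x$ is the true order of the main term), and the detour through $\sum_{p,q\leqslant x}\sigma(\gcd(p-1,q-1))$ is both unsubstantiated and unnecessary, because $\sum_{1<m\leqslant x}\varphi(m)\pi(x;m,1)^2/(x-1)^2\ll\log\log x$ follows directly from the split at $x/\log x$ just described. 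Deleting the false starts and carrying out the dyadic summation explicitly would turn this into the paper's proof.
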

\begin{proof}
From Lemma \ref{lem3-1}, we have
\begin{align}\label{eq3-1}
\sum_{n\leqslant x}\omega^*(n)^2=x\sum_{m\leqslant x}\varphi(m)\bigg(\sum_{\substack{p\leqslant x\\p\equiv 1\!\!\!\pmod{m}}}\frac{1}{p-1}\bigg)^2+O(x).
\end{align}
It is clear that the contribution from term $m=1$ is bounded by $O(x\log\log x)$.

Integrating by parts, we get
\begin{align}\label{eq3-2}
\sum_{\substack{p\leqslant x\\p\equiv 1\!\!\!\pmod{m}}}\frac{1}{p-1}=\frac{\pi(x;m,1)}{x-1}+\int_{2}^{x}\frac{\pi(t;m,1)}{(t-1)^2}dt.
\end{align}
Inserting Eq. (\ref{eq3-2}) into Eq. (\ref{eq3-1}), we deduce that
\begin{align}\label{eq3-3}
\sum_{n\le x}\omega^*(n)^2=S_1(x)+S_2(x)+S_3(x)+O\left(x\right),
\end{align}
where
$$S_1(x)=x\sum_{1<m\leqslant x}\varphi(m)\bigg(\int_{2}^{x}\frac{\pi(t;m,1)}{(t-1)^2}dt\bigg)^2,$$
$$S_2(x)=\frac{2x}{x-1}\sum_{m\leqslant x}\varphi(m)\pi(x;m,1)\int_{2}^{x}\frac{\pi(t;m,1)}{(t-1)^2}dt$$
and
$$S_3(x)=\frac{x}{(x-1)^2}\sum_{m\leqslant x}\varphi(m)\pi(x;m,1)^2.$$
 Then we will show that $S_2(x)$ and $S_3(x)$ provide the error terms.

We first deal with the sum $S_3(x)$ as it is easy to be bounded. By the Brun--Titchmarsh inequality (i.e., Lemma \ref{lem1}), we have
\begin{align*}
\frac{x}{(x-1)^2}\sum_{m\leqslant x/\log x}\varphi(m)\pi(x;m,1)^2&\ll x\sum_{m\leqslant x/\log x}\frac1{\varphi(m)}\frac{1}{\log^2 (x/m)}\\
&\ll x\sum_{m\leqslant x/\log x}\frac{\log\log m}{m}\frac{1}{\log^2 (x/m)}\\
&\ll x\log\log x,
\end{align*}
where, in the last but one estimation above, we employed the well--known estimate $\varphi(m)\gg m/\log\log m$.
Thus, by the trivial estimate we obtain
\begin{align}\label{S3}
S_3(x)&=\frac{x}{(x-1)^2}\sum_{x/\log x<m\leqslant x}\varphi(m)\pi(x;m,1)^2+O\left(x\log\log x\right)\nonumber\\
&\ll x\sum_{x/\log x<m\leqslant x}\frac{\varphi(m)}{m^2}+x\log x\log x
\nonumber\\
&\ll x\log\log x.
\end{align}

Next, we bound the sum $S_2(x)$. Since $\pi(t;m,1)=0$ for $t\leqslant m$, from the trivial estimate we have
\begin{align*}
\sum_{x/16<m\leqslant x}\varphi(m)\pi(x;m,1)\int_{2}^{x}\frac{\pi(t;m,1)}{(t-1)^2}dt&\ll x\sum_{x/16<m\leqslant x}\frac{\varphi(m)}{m}\int_{m}^{x}\frac{1}{mt}dt\\
&\leqslant x\sum_{x/16<m\leqslant x}\frac{\log x-\log m}{m}\\
&\ll x.
\end{align*}
We then prove that $$\sum_{m\leqslant x/16}\varphi(m)\pi(x;m,1)\int_{2}^{x}\frac{\pi(t;m,1)}{(t-1)^2}dt\ll x(\log\log x)^3.$$
From the estimate of Lemma \ref{lem} together with the Brun--Titchmarsh inequality, we deduce that
\begin{align*}
\sum_{m\leqslant x/16}\varphi(m)\pi(x;m,1)\int_{2}^{x}\frac{\pi(t;m,1)}{(t-1)^2}dt
&\ll \sum_{m\leqslant x/16}\varphi(m)\frac{x}{\varphi(m)\log (x/m)}\frac{\log\log x}{\varphi(m)}\\
&\ll x\log\log x\sum_{m\leqslant x/16}\frac{\log\log m}{m\log (x/m)}\\
&\ll x(\log\log x)^3,
\end{align*}
where, in the last but one estimate above, we used the estimate $\varphi(m)\gg m/\log\log m$ again. Thus, we proved that
\begin{align}\label{S2}
S_2(x)\ll x(\log\log x)^3.
\end{align}

Now, the lemma follows immediately from Eqs. (\ref{eq3-1}), (\ref{eq3-3}), (\ref{S3}) and (\ref{S2}).
\end{proof}

\section{Applications of the sieve results}

The aims of this section are the establishments of the following Propositions \ref{pro1} and \ref{pro2} which will be used later in the proof of Theorem \ref{thm1}.

\begin{proposition}\label{pro1}
Let $\epsilon$ be a sufficiently small positive number. Then
\begin{align*}
\sum_{x^{1-\epsilon}<m\leqslant x}\varphi(m)\bigg(\int_{2}^{x}\frac{\pi(t;m,1)}{(t-1)^2}dt\bigg)^2\ll \epsilon \log x.
\end{align*}
\end{proposition}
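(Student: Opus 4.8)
The plan is to split the range $x^{1-\epsilon}<m\leqslant x$ into dyadic-type blocks and estimate the inner integral separately in each block, exploiting the fact that for $m$ close to $x$ the integral $\int_2^x \pi(t;m,1)(t-1)^{-2}\,dt$ is very short (the integrand vanishes for $t\leqslant m$) while for smaller $m$ we already have good control from Lemma~\ref{lem}. First I would refine Lemma~\ref{lem}: its proof actually shows, for $6m<x$, that
\begin{align*}
\int_{2}^{x}\frac{\pi(t;m,1)}{(t-1)^2}\,dt\ll \frac{1}{m}+\frac{\log\!\big(\log x-\log m\big)_+}{\varphi(m)},
\end{align*}
and when $m$ is as large as $x^{1-\epsilon}$ (say $em<x$ but $x/m$ bounded) the Brun--Titchmarsh part of that argument gives the sharper bound $\ll \frac{1}{m}\log(x/m)$ directly, since $\int_{em}^{x}\frac{dt}{\varphi(m)t\log(t/m)}$ can be replaced by the trivial $\int_{m}^{x}\frac{dt}{mt}$ when the logarithm is $O(1)$; thus in the whole range $x^{1-\epsilon}<m\leqslant x$ one has $\int_2^x \pi(t;m,1)(t-1)^{-2}\,dt\ll \frac{1}{m}\log(x/m)$.

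Granting that pointwise bound, the sum in question is
\begin{align*}
\sum_{x^{1-\epsilon}<m\leqslant x}\varphi(m)\bigg(\int_{2}^{x}\frac{\pi(t;m,1)}{(t-1)^2}dt\bigg)^2
\ll \sum_{x^{1-\epsilon}<m\leqslant x}\frac{\varphi(m)}{m^2}\Big(\log(x/m)\Big)^2
\ll \frac{1}{x}\sum_{x^{1-\epsilon}<m\leqslant x}\Big(\log(x/m)\Big)^2,
\end{align*}
using $\varphi(m)\leqslant m\leqslant x$. Writing $m=x/u$ (so $1\leqslant u<x^{\epsilon}$) and comparing the sum with the integral $\int_{1}^{x^{\epsilon}}(\log u)^2\,\frac{x}{u^2}\,du$, or more simply summing by parts, the last expression is $\ll \frac{1}{x}\cdot x\,(\log x^{\epsilon})\cdot\text{(something bounded)}$; one checks that $\sum_{x^{1-\epsilon}<m\leqslant x}(\log(x/m))^2 \ll x\,(\epsilon\log x)$ — indeed the density of $m$ with $\log(x/m)\in[k,k+1]$ is $\ll x e^{-k}$, and $\sum_{0\le k\ll \epsilon\log x} k^2 e^{-k}\ll 1$, but there are $\sim x(1-x^{-\epsilon})$ terms total with the bulk having $\log(x/m)$ of size $O(1)$, contributing $\ll x$; a cleaner route is the crude bound $(\log(x/m))^2\ll \epsilon\log x\cdot\log(x/m)$ is false for small $\log(x/m)$, so instead I bound $(\log(x/m))^2\leqslant (\epsilon\log x)\log(x/m)$ only when $\log(x/m)\leqslant\epsilon\log x$, which holds throughout, giving $\sum \ll \epsilon\log x\sum_{x^{1-\epsilon}<m\leqslant x}\log(x/m)\ll \epsilon\log x\cdot x$, where the final sum is $\ll x$ by comparison with $\int_1^{x^\epsilon}\log u\cdot u^{-2}\,du \cdot x$ being absolutely bounded times $x$). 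Dividing by $x$ yields the claimed $\ll\epsilon\log x$.

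The main obstacle is establishing the sharpened pointwise estimate $\int_2^x \pi(t;m,1)(t-1)^{-2}\,dt\ll \frac{1}{m}\log(x/m)$ uniformly for $x^{1-\epsilon}<m\leqslant x$, rather than the weaker $\frac{\log(\log x-\log m)}{\varphi(m)}$ of Lemma~\ref{lem}, which would only give $\ll \frac{(\log\log x)^2}{x}\sum_m 1 \ll (\log\log x)^2$ — far too lossy. The point is that for such large $m$ the interval of integration is $[m,x]$ of multiplicative length $x/m\leqslant x^{\epsilon}$, so the Brun--Titchmarsh saving $\log(t/m)$ in the denominator is never more than $\epsilon\log x$ and is often $O(1)$; splitting $[m,x]$ at $em$ and using Brun--Titchmarsh only on $[em,x]$ (where now the single term $\pi(x;m,1)\ll x/(\varphi(m)\log(x/m))$ together with $\int_{em}^x t^{-2}\,dt\ll 1/m$ suffices) handles it. Once that is in place the remaining steps are the elementary summation over $m$ described above; care must be taken that the implied constants do not secretly depend on $\epsilon$, but since every estimate used is Brun--Titchmarsh or a trivial bound, uniformity in $\epsilon$ is automatic.
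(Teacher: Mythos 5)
There is a genuine gap here, and it is fatal to the whole strategy rather than a repairable detail. Your pointwise estimate $\int_2^x\pi(t;m,1)(t-1)^{-2}\,dt\ll m^{-1}\log(x/m)$ is fine --- indeed it is just the trivial bound $\pi(t;m,1)\leqslant (t-1)/m$ integrated over $[m,x]$, no Brun--Titchmarsh needed --- but the summation step that follows is wrong. From $\varphi(m)\leqslant m$ you get $\varphi(m)/m^2\leqslant 1/m$, and for $x^{1-\epsilon}<m\leqslant x$ one has $1/m\leqslant x^{\epsilon-1}$, \emph{not} $1/m\leqslant 1/x$: the replacement of $1/m$ by $1/x$ reverses the inequality. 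Carrying the correct weight $1/m$ through, what your argument actually yields is
$$\sum_{x^{1-\epsilon}<m\leqslant x}\frac{(\log(x/m))^2}{m}\asymp\int_{1}^{x^{\epsilon}}\frac{(\log u)^2}{u}\,du=\tfrac13(\epsilon\log x)^3,$$
which for any fixed $\epsilon$ grows like $(\log x)^3$ and is nowhere near the required $O(\epsilon\log x)$; an error term of this size would swamp the main term $2\kappa\log x$ in Theorem \ref{thm1}. (That your $1/x$ step would give the suspiciously strong bound $O(1)$ for the whole sum is itself a warning sign.)

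The loss cannot be recovered by sharpening the pointwise bound, because the trivial estimate $\pi(t;m,1)\leqslant(t-1)/m$ overcounts the primes $p\equiv 1\pmod m$ with $m<p\leqslant m^{1+\epsilon}$ by roughly a factor $\log x$, and Brun--Titchmarsh gives no saving in that range either, since there $\log(t/m)$ may be $O(1)$. This is precisely the obstruction flagged in the paper's abstract. The paper's proof takes a different route: it expands the square, writes $p_i=m\ell_i+1$ with $\ell_i<x^{\epsilon}$, exchanges the order of summation, and for each fixed pair $(\ell_1,\ell_2)$ sieves in the variable $m$; Lemma \ref{lem2} (a two-dimensional Brun sieve) shows that the number of $m\leqslant t$ with $m\ell_1+1$ and $m\ell_2+1$ both prime is $\ll\prod_{p\mid E}(1+1/p)^2\,t/(\log t)^2$, recovering the $(\log t)^{-2}$ density that the trivial bound misses, and the resulting singular-series sum over $\ell_1<\ell_2$ is controlled by Lemma \ref{Ding-lem}. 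Some correlation estimate of this kind is unavoidable for Proposition \ref{pro1}.
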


\begin{proposition}\label{pro2}
Let $\epsilon$ be a sufficiently small positive number. Then
\begin{align*}
\sum_{m\leqslant x^{1-\epsilon}}\varphi(m)\bigg(\int_{2}^{m^{1+\epsilon}}\frac{\pi(t;m,1)}{(t-1)^2}dt\bigg)^2\ll\epsilon \log x+\log\log x
\end{align*}
\end{proposition}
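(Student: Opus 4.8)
The plan is to pass from the integral to the corresponding Dirichlet-type sum over primes $p\equiv 1\pmod m$ lying in the short range $(m,m^{1+\epsilon}]$, and then to handle the resulting quadratic quantity by a two–variable sieve. By the partial summation underlying Eq.~(\ref{eq3-2}), used with the upper limit $m^{1+\epsilon}$ in place of $x$,
$$\int_{2}^{m^{1+\epsilon}}\frac{\pi(t;m,1)}{(t-1)^{2}}\,dt\ \le\ \sum_{\substack{p\le m^{1+\epsilon}\\ p\equiv 1\!\!\!\pmod m}}\frac1{p-1}\ =\ \frac1m\sum_{\substack{1\le a\le m^{\epsilon}\\ am+1\ \text{prime}}}\frac1a ,$$
because a prime $p\equiv 1\pmod m$ with $p\le m^{1+\epsilon}$ is exactly a prime of the shape $am+1$ with $1\le a\le m^{\epsilon}$. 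Squaring, multiplying by $\varphi(m)$ and using $\varphi(m)/m^{2}\le 1/m$, it therefore suffices to prove
$$\sum_{m\le x^{1-\epsilon}}\frac1m\sum_{\substack{a,b\le m^{\epsilon}\\ am+1,\,bm+1\ \text{prime}}}\frac1{ab}\ \ll\ \epsilon\log x+\log\log x .$$

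I would split the inner double sum into the diagonal $a=b$ and the off–diagonal $a\ne b$, interchanging in each case the order of summation so that $m$ is summed last, over $\max(a,b)^{1/\epsilon}\le m\le x^{1-\epsilon}$ (so in particular only $a,b\le x^{\epsilon(1-\epsilon)}$ arise). For the diagonal, Brun--Titchmarsh (Lemma~\ref{lem1}) gives $\#\{m\le t:am+1\ \text{prime}\}\ll\frac{a}{\varphi(a)}\cdot\frac{t}{\log t}$, so partial summation yields $\sum_{a^{1/\epsilon}\le m\le x^{1-\epsilon},\,am+1\ \text{prime}}\frac1m\ll\frac{a}{\varphi(a)}\log\log x$, and the diagonal is $\ll\log\log x\sum_{a\ge 1}\frac1{a\varphi(a)}\ll\log\log x$ — this is the source of the term $\log\log x$.

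The heart of the matter is the off–diagonal $\sum_{a\ne b}\frac1{ab}\sum_{\max(a,b)^{1/\epsilon}\le m\le x^{1-\epsilon},\ am+1,\,bm+1\ \text{prime}}\frac1m$. Here applying Brun--Titchmarsh to just one of the two conditions is too lossy (it costs powers of $\log\log x$); instead I would invoke the standard two–dimensional sieve upper bound
$$\#\{m\le t:\ am+1\ \text{and}\ bm+1\ \text{are both prime}\}\ \ll\ \mathfrak S(a,b)\,\frac{t}{(\log t)^{2}},$$
whose singular series factors as $\mathfrak S(a,b)=\frac{a}{\varphi(a)}\cdot\frac{b}{\varphi(b)}\cdot\prod_{p\mid a-b,\ p\nmid ab}\frac{p}{p-1}\cdot\prod_{p\nmid ab(a-b)}\bigl(1-\tfrac1{(p-1)^{2}}\bigr)$. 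Partial summation, using the lower cut–off $m\ge\max(a,b)^{1/\epsilon}$, turns the inner sum into $\ll\frac{\epsilon\,\mathfrak S(a,b)}{\log\max(a,b)}$. Then, summing over $a<b\le x^{\epsilon(1-\epsilon)}$ and inserting the standard singular–series average $\sum_{a\le A}\mathfrak S(a,b)\ll\frac{b}{\varphi(b)}A$ (uniformly in $b$), which gives $\sum_{a<b}\frac{\mathfrak S(a,b)}{a}\ll\frac{b}{\varphi(b)}\log b$, the off–diagonal is
$$\ll\ \epsilon\sum_{b\le x^{\epsilon(1-\epsilon)}}\frac{1}{b\log b}\cdot\frac{b}{\varphi(b)}\log b\ =\ \epsilon\sum_{b\le x^{\epsilon(1-\epsilon)}}\frac{1}{\varphi(b)}\ \ll\ \epsilon^{2}\log x$$
by Landau's estimate (Lemma~\ref{landau}). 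Combining the two pieces gives $\ll\log\log x+\epsilon^{2}\log x\ll\epsilon\log x+\log\log x$.

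The main obstacle is this off–diagonal estimate. One needs the genuine $(\log t)^{-2}$–saving in the two–variable sieve (a bare Brun--Titchmarsh — the point the abstract flags as "not applicable" — would only deliver a bound of size $(\log\log x)^{2}\log x$), together with the averaged control $\sum_{a\le A}\mathfrak S(a,b)\ll\frac{b}{\varphi(b)}A$ uniform in $b$: it is precisely the factor $\frac{b}{\varphi(b)}$ in the singular series which, after cancellation against the $\frac1{b}$ weight and the cut–off factor $\frac1{\log b}$, reduces everything to $\sum_{b}1/\varphi(b)$ and hence to a clean $\epsilon^{2}\log x$. The remaining bookkeeping — the small values of $m$ with $m^{\epsilon}<2$, and the range of $t$ where the sieve is not yet effective — contributes only $O_{\epsilon}(1)=O(\log\log x)$ and is harmless.
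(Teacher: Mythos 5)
Your proposal is correct and shares the paper's skeleton: pass from the integral to $\sum_{p\le m^{1+\epsilon},\,p\equiv1\pmod{m}}1/(p-1)$, write $p=am+1$ with $a\le m^{\epsilon}$, interchange so that $m$ runs over $m\ge\max(a,b)^{1/\epsilon}$, split into diagonal and off--diagonal, and sieve; the one--dimensional bound yields the $\log\log x$ from the diagonal, while in the off--diagonal the two--dimensional sieve combined with the lower cut--off $m\ge\max(a,b)^{1/\epsilon}$ produces the decisive factor $\epsilon/\log\max(a,b)$ --- exactly the mechanism of Eqs.\ (\ref{eq-15-1})--(\ref{inequality4}). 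Where you genuinely diverge is the final averaging of the singular series. The paper majorizes it by $\prod_{p\mid \ell_1\ell_2(\ell_2-\ell_1)}(1+1/p)^2$, invokes the Ding--Zhao bound (Lemma \ref{Ding-lem}), and then needs the device $1/\log\ell_2=\int_{\ell_2}^{\infty}dt/(t\log^2t)$ in (\ref{inequality5})--(\ref{eq-16-2}) to exploit the weight $1/\log\ell_2$; you instead keep the factored form of $\mathfrak S(a,b)$ and use the classical average $\sum_{a<b}\mathfrak S(a,b)/a\ll (b/\varphi(b))\log b$, after which the $\log b$ cancels against the weight $1/(b\log b)$ and Landau's Lemma \ref{landau} finishes, with the slightly sharper off--diagonal bound $\epsilon^{2}\log x$. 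This is cleaner and avoids Lemma \ref{Ding-lem} altogether. One small caveat: your intermediate claim $\sum_{a\le A}\mathfrak S(a,b)\ll (b/\varphi(b))A$ uniformly in $b$ fails for bounded $A$ (already $\mathfrak S(1,b)$ can be of size $(b/\varphi(b))\log\log b$), but the consequence you actually use survives, since the range $a\le(\log b)^{2}$ contributes only $O\bigl((b/\varphi(b))(\log\log b)^{2}\bigr)$ to $\sum_{a<b}\mathfrak S(a,b)/a$; with that repaired, and the small ranges of $a,b,m$ handled by Mertens as you indicate, the argument is complete.
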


\begin{proposition}\label{pro3}
Let $\epsilon$ be a sufficiently small positive number. Then
\begin{align*}
\sum_{m\leqslant x^{1-\epsilon}}\varphi(m)\int_{2}^{m^{1+\epsilon}}\frac{\pi(t;m,1)}{(t-1)^2}dt\int_{m^{1+\epsilon}}^{x}\frac{\pi(t;m,1)}{(t-1)^2}dt
\ll\epsilon(\log \epsilon^{-1})^2\log x+(\log\log x)^2.
\end{align*}
\end{proposition}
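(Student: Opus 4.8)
Throughout write $X_0=x^{1-\epsilon^2}$, $A_m=\int_2^{m^{1+\epsilon}}\frac{\pi(t;m,1)}{(t-1)^2}\,dt$ and $B_m=\int_{m^{1+\epsilon}}^{x}\frac{\pi(t;m,1)}{(t-1)^2}\,dt$, so (since $A_1=0$) we must bound $\sum_{2\le m\le x^{1-\epsilon}}\varphi(m)A_mB_m$. First I would obtain a pointwise bound for the long-range factor from the Brun--Titchmarsh inequality alone. If $m\le\sqrt x$ I split $[m^{1+\epsilon},x]$ at $m^2$: the substitution $u=\log(t/m)$ bounds the piece over $[m^{1+\epsilon},m^2]$ by $\ll\varphi(m)^{-1}\int_{\epsilon\log m}^{\log m}u^{-1}\,du=\varphi(m)^{-1}\log\epsilon^{-1}$, while on $[m^2,x]$ one has $\log(t/m)\ge\tfrac12\log t$, so Lemma \ref{lem1} gives $\pi(t;m,1)\ll t/(\varphi(m)\log t)$ and this piece is $\ll\varphi(m)^{-1}\int_{m^2}^{x}\frac{dt}{t\log t}\ll\varphi(m)^{-1}\log\tfrac{\log x}{\log m}$; if $\sqrt x\le m\le x^{1-\epsilon}$, a single use of Lemma \ref{lem1} together with $\tfrac{\log(x/m)}{\epsilon\log m}\le\epsilon^{-1}$ gives $B_m\ll\varphi(m)^{-1}\log\epsilon^{-1}$. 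In every case $\varphi(m)B_m\ll\log\epsilon^{-1}+\log\tfrac{\log x}{\log m}$, and it suffices to prove
$$\sum_{m\le x^{1-\epsilon}}A_m\Bigl(\log\epsilon^{-1}+\log\tfrac{\log x}{\log m}\Bigr)\ll\epsilon(\log\epsilon^{-1})^2\log x.$$

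The key idea is that $A_m$ should not be estimated pointwise — Brun--Titchmarsh loses a factor $\asymp\epsilon^{-1}$ near the endpoint $t\approx m$ — but summed over $m$ inside the integral. Writing the left-hand side as $\int_2^{X_0}(t-1)^{-2}\sum_m\pi(t;m,1)\bigl(\log\epsilon^{-1}+\log\tfrac{\log x}{\log m}\bigr)\,dt$, where for fixed $t$ the inner sum ranges over $m$ with $t^{1/(1+\epsilon)}\le m\le x^{1-\epsilon}$ and $m<t$, the constraint $m\ge t^{1/(1+\epsilon)}$ lets one bound $\log\tfrac{\log x}{\log m}$ from above by the $m$-free quantity $\log(1+\epsilon)+\log\tfrac{\log x}{\log t}$ and pull it out of the sum, reducing matters to $\sum_{t^{1/(1+\epsilon)}\le m<t}\pi(t;m,1)$. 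Now for each prime $p\le t$ we have $p-1<t$, so a divisor $m\mid p-1$ with $m\ge t^{1/(1+\epsilon)}$ corresponds to the codivisor $d=(p-1)/m<t^{\epsilon/(1+\epsilon)}$; hence this sum is at most $\sum_{d\le t^{\epsilon/(1+\epsilon)}}\pi(t;d,1)$. Since $\tfrac{\epsilon}{1+\epsilon}\le\tfrac12$ every such $d$ satisfies $d\le\sqrt t$, so Lemma \ref{lem1} gives $\pi(t;d,1)\ll t/(\varphi(d)\log t)$ and Lemma \ref{landau} yields $\sum_{d\le t^{\epsilon/(1+\epsilon)}}\pi(t;d,1)\ll \tfrac{t}{\log t}\bigl(\tfrac{\epsilon}{1+\epsilon}\log t+1\bigr)\ll \epsilon t+t/\log t$.

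For $t\ge 2^{(1+\epsilon)/\epsilon}$ (so $\log t\ge\tfrac{\log 2}{\epsilon}$) this last bound is $\ll\epsilon t$, and the bounded range $t<2^{(1+\epsilon)/\epsilon}$, where the $d$-sum is just $\pi(t)\le t$, contributes $\ll\epsilon^{-1}(\log\epsilon^{-1}+\log\log x)\ll\epsilon\log x$ because $x$ is large in terms of $\epsilon$. Substituting these estimates, the quantity in question is
$$\ll\epsilon\int_2^{X_0}\frac{\log\epsilon^{-1}+\log\frac{\log x}{\log t}}{t}\,dt+\epsilon\log x\ll\epsilon\log\epsilon^{-1}\log x+\epsilon\log x\ll\epsilon(\log\epsilon^{-1})^2\log x,$$
where I used $\int_2^{X_0}t^{-1}\,dt\ll\log x$ and, after $t=x^{\sigma}$, $\int_2^{X_0}\frac{\log(\log x/\log t)}{t}\,dt\le\log x\int_0^1\log\tfrac1\sigma\,d\sigma=\log x$. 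This is a shade stronger than the asserted estimate and, as one should expect of an error term, uses neither the Elliott--Halberstam Conjecture nor Lemma \ref{lem}. The one genuinely delicate step is the codivisor reversal: it moves the moduli out of the range $m\in[t^{1/(1+\epsilon)},t)$, where Brun--Titchmarsh is useless since $\log(t/m)$ can be tiny, into the range $d\le t^{\epsilon/(1+\epsilon)}\le\sqrt t$, where it loses only a bounded factor; this is precisely the point at which a direct appeal to the Brun--Titchmarsh inequality (for instance via Lemma \ref{lem}) would be too wasteful.
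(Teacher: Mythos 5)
Your argument is correct, and it takes a genuinely different route from the paper. Both proofs begin identically, using the Brun--Titchmarsh inequality to replace the long-range factor by $\varphi(m)^{-1}(\log\epsilon^{-1}+\log\log x-\log\log m)$; the divergence is in how the short integral $\int_2^{m^{1+\epsilon}}$ is then summed over $m$. The paper writes $p=m\ell+1$, fixes the small quotient $\ell$, and must then count integers $m\leqslant t$ with $m\ell+1$ prime (and, in the companion Propositions \ref{pro1} and \ref{pro2}, with two such linear forms simultaneously prime); this forces the Brun sieve (Lemma \ref{lem2}) and, to control the resulting products $\prod_{p\mid E}(1+1/p)^2$ on average over $\ell$, the Ding--Zhao bound (Lemma \ref{Ding-lem}). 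You instead interchange the $t$-integral with the $m$-sum and, for fixed $t$, apply the codivisor reversal $d=(p-1)/m$: since $m\geqslant t^{1/(1+\epsilon)}$ forces $d<t^{\epsilon/(1+\epsilon)}\leqslant\sqrt t$, the sum over the useless large moduli becomes $\sum_{d\leqslant t^{\epsilon/(1+\epsilon)}}\pi(t;d,1)$, where Brun--Titchmarsh loses only a bounded factor and Lemma \ref{landau} finishes the job --- crucially, you keep $p$ as the prime being counted in a progression rather than asking $m\ell+1$ to be prime, which is exactly what lets you dispense with the sieve. The payoff is a more elementary proof and the slightly sharper bound $\epsilon\log\epsilon^{-1}\log x$ with no secondary terms. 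Two small points deserve a word of justification but are sound: the passage ``the $d$-sum is just $\pi(t)$'' for $t<2^{(1+\epsilon)/\epsilon}$ holds because there $t^{\epsilon/(1+\epsilon)}<2$ forces $d=1$; and absorbing the resulting $\epsilon^{-1}(\log\epsilon^{-1}+\log\log x)$ into $\epsilon\log x$ uses the paper's standing convention that $x$ is large in terms of $\epsilon$, the same convention the authors invoke to drop their own stray $(\log\epsilon^{-1})^2$ term.
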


In order to prove the propositions, we need some more lemmas. The following one is quoted from a recent article of Ding and Zhao.

\begin{lemma}\cite[Proposition 2.1]{Di-Zh}\label{Ding-lem}
For any given positive integers $g\geqslant 2$ and $\ell$, we have
$$\sum_{1<h_1<\cdot\cdot\cdot<h_g<z}\frac{1}{h_1\cdot\cdot\cdot h_g}\prod_{p|E_{h_1,...,h_g}}\left(1+\frac{1}{p}\right)^\ell\ll (\log z)^g,$$
provided that $z$ is sufficiently large, where
$$E_{h_1,...,h_g}=h_1\cdot\cdot\cdot h_g\prod_{1\leqslant i<j\leqslant g}(h_j-h_i)$$
and the implied constant depends only on $g$ and $\ell$.
\end{lemma}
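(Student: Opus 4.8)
The plan is to separate the arithmetic weight from the combinatorics of the tuple. Set $f(n)=\prod_{p\mid n}(1+1/p)^\ell$, which is multiplicative, and write it as a Dirichlet convolution $f=\mathbf 1 * h$. Then $f(n)=\sum_{e\mid n}h(e)$, where $h$ is multiplicative, supported on squarefree integers, with $h(p)=(1+1/p)^\ell-1=O_\ell(1/p)$ and $h(p^a)=0$ for $a\ge 2$; in particular $h(e)\le C_\ell^{\omega(e)}/e$ on squarefree $e$. Inserting this and interchanging the order of summation turns the quantity to be estimated into
\[
\sum_{e\ \text{squarefree}}h(e)\,T_g(e,z),\qquad T_g(e,z):=\sum_{\substack{1<h_1<\cdots<h_g<z\\ e\mid E_{h_1,\dots,h_g}}}\frac{1}{h_1\cdots h_g}.
\]
So everything reduces to the uniform bound $T_g(e,z)\ll_{g}N^{\omega(e)}(\log z)^{g}/e$ for squarefree $e$, where $N:=g+\binom{g}{2}$ is the number of factors making up $E_{h_1,\dots,h_g}$.

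Next I would estimate $T_g(e,z)$ by distributing the primes of $e$ among these $N$ factors. If $e\mid E_{h_1,\dots,h_g}$, then each prime $p\mid e$ divides at least one of $h_1,\dots,h_g$ or one of the differences $h_j-h_i$; assigning to every such $p$ one factor that it divides produces a coprime factorization $e=\prod_k e_k$ indexed by the factors, with $e_k$ dividing the $k$-th factor. There are at most $N^{\omega(e)}$ assignments, and summing over all of them overcounts the tuples, hence bounds $T_g(e,z)$ from above. For a fixed factorization the constraints read $h_i\equiv 0\pmod{e_{\{i\}}}$ and $h_j\equiv h_i\pmod{e_{\{i,j\}}}$. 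I would then sum the variables in increasing order, charging each difference-constraint to the larger index $j$; when $h_j$ is summed all of its active constraints combine, by the coprimality of the moduli, into a single residue class modulo $Q_j:=e_{\{j\}}\prod_{i<j}e_{\{i,j\}}$, and one checks that $\prod_{j=1}^{g}Q_j=e$. Since $h_j$ ranges over $(h_{j-1},z)$ and every constraint either reads $\equiv 0$ or forces $h_j\ge h_i+Q_j$, the inner sum obeys $\sum_{h_{j-1}<h_j<z,\,h_j\equiv r_j\,(Q_j)}1/h_j\ll \log z/Q_j+1/h_{j-1}$, and the nested summation then delivers $T_g(e,z)\ll_g N^{\omega(e)}(\log z)^g/e$.

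Combining the two steps, the whole sum is $\ll (\log z)^g\sum_{e\ \text{squarefree}}h(e)N^{\omega(e)}/e$, and the series factors as the Euler product $\prod_p\bigl(1+N h(p)/p\bigr)=\prod_p\bigl(1+O_\ell(N/p^2)\bigr)$, which converges. This yields the stated bound with an implied constant depending only on $g$ and $\ell$.

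\textbf{The main difficulty} lies in the nested harmonic-sum estimate of the second step: the lower-order terms $1/h_{j-1}$ appear precisely when a combined modulus $Q_j$ exceeds $z$ and forces some variables to lie close together, and one must verify that these diagonal contributions do not destroy the clean saving $1/e$. The two points that make it work are that summing in increasing order lets each difference-constraint start at a value at least as large as its modulus, and that forcing variables closer together only shrinks the relevant harmonic sums; organizing this bookkeeping cleanly may most comfortably be carried out by induction on $g$.
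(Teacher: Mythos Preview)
The paper does not prove this lemma at all: it is quoted verbatim from \cite[Proposition 2.1]{Di-Zh} and used as a black box. There is therefore no ``paper's own proof'' to compare your attempt against.

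That said, your strategy is a standard and viable route to such an estimate. Opening $\prod_{p\mid E}(1+1/p)^\ell$ via $f=\mathbf 1*h$ with $h$ squarefree-supported and $h(p)=O_\ell(1/p)$ is exactly right, and reducing to the estimate $T_g(e,z)\ll_g N^{\omega(e)}(\log z)^g/e$ for squarefree $e$ is the natural target. The ``distribute the primes of $e$ among the $N$ factors'' device is the correct combinatorial step; it produces coprime moduli whose product is $e$, and the final Euler product $\prod_p(1+Nh(p)/p)=\prod_p(1+O_{g,\ell}(p^{-2}))$ does converge.

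Two small points to tighten. First, the sentence ``every constraint either reads $\equiv 0$ or forces $h_j\ge h_i+Q_j$'' overstates matters: the congruence $h_j\equiv h_i\pmod{e_{\{i,j\}}}$ with $h_j>h_i$ only forces $h_j\ge h_i+e_{\{i,j\}}$, which is generally smaller than $Q_j$. This does not invalidate your bound $\sum_{h_{j-1}<h_j<z,\,h_j\equiv r_j(Q_j)}1/h_j\ll (\log z)/Q_j+1/h_{j-1}$, which follows simply by separating the first admissible $h_j$ from the rest; just adjust the justification. Second, in the nested sum the cross terms coming from the $1/h_{j-1}$ summands do not automatically carry the factor $1/Q_{j-1}$, so a naive expansion of $\prod_j((\log z)/Q_j+1/h_{j-1})$ does not immediately give $(\log z)^g/e$. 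The clean way to close this is indeed the induction on $g$ you suggest: when the boundary term $1/h_{j-1}$ appears, absorb it into the sum over $h_{j-1}$ (turning $1/h_{j-1}$ into $1/h_{j-1}^2$), observe that $\sum_{h_{j-1}\equiv r(Q_{j-1})}1/h_{j-1}^2\ll 1/(Q_{j-1}\max(r,Q_{j-1}))$, and check that the residue $r$ is always at least the product of the moduli already imposed on $h_{j-1}$ by earlier variables. Carrying this bookkeeping through yields the claimed $T_g(e,z)\ll_g N^{\omega(e)}(\log z)^g/e$.
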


We also need a weak form of the standard result \cite[Theorem 2.3, Page 70]{Halberstam} deduced from the Brun sieve.

\begin{lemma}\label{lem2}
Let $g$ be a natural number, and let $a_i,b_i~(i=1,...,g)$ be integers satisfying
$$(a_i,b_i)=1 \quad (i=1,...,g)$$
and
$$E:=\prod_{i=1}^ga_i\prod_{1\leqslant r<s\leqslant g}(a_rb_s-a_sb_r)\neq0.$$
Then
$$\#\{n\leqslant y:a_in+b_i\in \mathcal{P}, i=1,...,g\}\ll \prod_{p|E}\left(1-\frac1p\right)^{\rho(p)-g}\frac{y}{(\log y)^{g}},$$
where $\rho(p)$ denotes the number of solutions of
$$\prod_{i=1}^g(a_in+b_i)\equiv 0\pmod{p},$$
and the constant implied by the $\ll-$symbol depends on $g$ and $b_1,...,b_g$.
\end{lemma}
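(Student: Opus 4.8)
The plan is to deduce Lemma~\ref{lem2} from the upper-bound Brun sieve applied to the polynomial sequence $\prod_{i=1}^{g}(a_in+b_i)$ (that is, from \cite[Theorem 2.3]{Halberstam}), and then to rewrite the singular series produced by the sieve into the shape displayed in the statement. Throughout, implied constants are allowed to depend on $g$ and on $b_1,\dots,b_g$.

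First I would carry out the routine reduction to a sifting function. Put $\mathcal{A}=\{\prod_{i=1}^{g}(a_in+b_i):1\le n\le y\}$ and let $z=y^{1/s}$ for a constant $s=s(g)$ chosen large enough. If $a_in+b_i$ is prime for every $i$, then either $a_in+b_i\le z$ for some $i$ — which happens for at most $\sum_i(z/a_i+1)\ll z$ values of $n\le y$ — or every $a_in+b_i$ is coprime to $\prod_{p\le z}p$, so that $n$ is counted by $S(\mathcal{A},z)$, the number of $n\le y$ with $\big(\prod_i(a_in+b_i),\prod_{p\le z}p\big)=1$. Hence
$$\#\{n\le y:a_in+b_i\in\mathcal{P},\ i=1,\dots,g\}\le S(\mathcal{A},z)+O(z).$$
Now $\mathcal{A}$ has density the multiplicative function $\rho$ with $\rho(p)$ equal to the number of roots of $\prod_i(a_in+b_i)\bmod p$: for squarefree $d$ one has $\#\{n\le y:d\mid\prod_i(a_in+b_i)\}=\frac{\rho(d)}{d}y+O(\rho(d))$ with $\rho(d)\le g^{\omega(d)}$, so the remainders are admissible up to a level $y^{1-\delta}$, and $0\le\rho(p)\le g$ for every $p$. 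Thus $\mathcal{A}$ is a sieve problem of dimension at most $g$ with essentially perfect level of distribution, and the fundamental lemma of the Brun sieve gives
$$S(\mathcal{A},z)\ll\ \mathfrak{S}\,\frac{y}{(\log y)^{g}},\qquad \mathfrak{S}:=\prod_{p}\Big(1-\frac{\rho(p)}{p}\Big)\Big(1-\frac1p\Big)^{-g}.$$
Since $\rho(p)\le g$ forces $\prod_{p\mid E}(1-1/p)^{\rho(p)-g}\ge1$, the error $O(z)=O\big(y^{1/s}\big)$ is absorbed into $\frac{y}{(\log y)^{g}}\prod_{p\mid E}(1-1/p)^{\rho(p)-g}$, and it remains only to bound $\mathfrak{S}$.

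To estimate $\mathfrak{S}$ I would split the Euler product according to whether $p\mid E$ or not. If $p\nmid E$, then each $a_i$ is invertible modulo $p$ and $p\nmid(a_rb_s-a_sb_r)$ for all $r<s$, so the residues $-b_ia_i^{-1}\bmod p$ $(i=1,\dots,g)$ are pairwise distinct roots; hence $\rho(p)=g$, the local factor equals $(1-g/p)(1-1/p)^{-g}=1+O_g(p^{-2})$, and therefore $\prod_{p\nmid E}(1-\rho(p)/p)(1-1/p)^{-g}\ll_g1$. If $p\mid E$, then $0\le\rho(p)\le p$, so Bernoulli's inequality yields $1-\rho(p)/p\le(1-1/p)^{\rho(p)}$ and the local factor is at most $(1-1/p)^{\rho(p)-g}$. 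Multiplying the two ranges gives $\mathfrak{S}\ll_g\prod_{p\mid E}(1-1/p)^{\rho(p)-g}$, which combined with the bound for $S(\mathcal{A},z)$ completes the proof.

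The one genuinely non-mechanical step is the displayed estimate for $S(\mathcal{A},z)$: it needs a form of the Brun sieve whose implied constant is uniform enough in the coefficients $a_i$, which is precisely why \cite[Theorem 2.3]{Halberstam} is quoted rather than an ad hoc count. Granting that input, the rest is the elementary estimation of $\mathfrak{S}$ carried out above; note in particular that it suffices for the sifting dimension to be $\le g$ rather than exactly $g$, since for an upper bound one only needs $\prod_{p<z}(1-\rho(p)/p)\asymp_g\mathfrak{S}(\log z)^{-g}$, which follows from the convergence of $\prod_p(1-\rho(p)/p)(1-1/p)^{-g}$ established above together with Mertens' theorem.
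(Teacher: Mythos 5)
The paper offers no proof of this lemma: it is imported wholesale as ``a weak form of'' Theorem 2.3, page 70, of Halberstam--Richert, so there is no internal argument to measure yours against. What you have written is, in outline, precisely the standard proof of that cited theorem: an upper-bound (Brun/fundamental-lemma) sieve of dimension $g$ applied to $\prod_{i}(a_in+b_i)$ with level $y^{1-\delta}$, followed by reduction of the singular series $\mathfrak{S}=\prod_p(1-\rho(p)/p)(1-1/p)^{-g}$ to a product over $p\mid E$. The computations that do the work are correct: $(a_i,b_i)=1$ gives $\rho(p)\le g$ and admissible remainders $O(g^{\omega(d)})$; $p\nmid E$ forces the $g$ roots $-b_ia_i^{-1}$ to exist and be distinct, so $\rho(p)=g$ and the local factor is $1+O_g(p^{-2})$; and Bernoulli's inequality handles $p\mid E$. (Indeed $(1-\rho(p)/p)(1-1/p)^{-g}$ is identically Halberstam--Richert's local factor $\bigl(1-\frac{\rho(p)-1}{p-1}\bigr)\bigl(1-\frac1p\bigr)^{1-g}$, so your $\mathfrak{S}$ is exactly theirs.)

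One step is weaker than you claim, and it is the only place where uniformity in the $a_i$ is genuinely at stake: the assertion $\prod_{p<z}(1-\rho(p)/p)\asymp_g\mathfrak{S}(\log z)^{-g}$ does not follow from mere convergence of $\prod_p(1-\rho(p)/p)(1-1/p)^{-g}$, because the tail is controlled termwise only for $p\nmid E$; the primes $p\mid E$ with $p\ge z$ contribute a factor $1+O\bigl(g\log|E|/(z\log z)\bigr)$, which is bounded only when $\log|E|$ is not enormously larger than $z=y^{1/s}$. This is exactly the source of the factor $1+O\bigl((\log\log 3|E|)/\log y\bigr)$ in Halberstam--Richert's own statement; the ``weak form'' quoted in the paper silently assumes it can be absorbed, which is harmless in the applications here (where $E=\ell_1\ell_2(\ell_2-\ell_1)\le y^{O(\epsilon)}$) but should be flagged in a proof claiming a constant depending only on $g$ and the $b_i$. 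Two smaller points: the fundamental lemma leaves a remainder $O(y^{1-\delta/2})$ alongside $\mathfrak{S}\,y(\log y)^{-g}$, and the one-sided dimension condition degenerates at primes $p\le g$ with $\rho(p)=p$ (where $V(z)=\mathfrak{S}=0$); both are repaired by your own observation that the final right-hand side is at least $y(\log y)^{-g}$ since $\prod_{p\mid E}(1-1/p)^{\rho(p)-g}\ge 1$, but they should be absorbed at that final stage rather than suppressed in the displayed bound for $S(\mathcal{A},z)$.
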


We are ready to offer the proof of Proposition \ref{pro1}.
\begin{proof}[Proof of Proposition \ref{pro1}]
Let
$$
\mathcal{A}_{1}(x)=\sum_{x^{1-\epsilon}<m\leqslant x}\varphi(m)\bigg(\sum_{\substack{p\leqslant x\\p\equiv 1\!\!\!\pmod{m}}}\frac{1}{p-1}\bigg)^2.
$$
Expanding the sum, we have
$$
\mathcal{A}_{1}(x)=\sum_{x^{1-\epsilon}<m\leqslant x}\varphi(m)\sum_{\substack{p_1,~p_2\leqslant x\\p_1\equiv 1\!\!\!\pmod{m}\\p_2\equiv 1\!\!\!\pmod{m}}}\frac{1}{(p_1-1)(p_2-1)}.
$$
Suppose that $p_i=m\ell_i+1~(i=1,2)$, then $\ell_i<x/m$. Changing orders of the sums above, we have
\begin{align*}
\mathcal{A}_{1}(x)&\leqslant \sum_{\ell_1,\ell_2<x^{\epsilon}}\frac{1}{\ell_1\ell_2}\sum_{\substack{x^{1-\epsilon}<m\leqslant x\\ m\ell_i+1\in \mathcal{P},~ i=1,2}}\frac{\varphi(m)}{m^2}
\leqslant \sum_{\ell_1,\ell_2<x^{\epsilon}}\frac{1}{\ell_1\ell_2}\sum_{\substack{x^{1-\epsilon}<m\leqslant x\\ m\ell_i+1\in \mathcal{P},~ i=1,2}}\frac{1}{m},
\end{align*}
where $\mathcal{P}$ denotes the set of primes.
Separating the case of $\ell_1=\ell_2$ with $\ell_1\neq \ell_2$, it follows that
\begin{align}\label{equa1}
\mathcal{A}_{1}(x)\ll \sum_{\ell_1<x^{\epsilon}}\frac{1}{\ell_1^2}\sum_{\substack{x^{1-\epsilon}<m\leqslant x\\ m\ell_1+1\in \mathcal{P}}}\frac{1}{m}+\sum_{\ell_1<\ell_2<x^{\epsilon}}\frac{1}{\ell_1\ell_2}\sum_{\substack{x^{1-\epsilon}<m\leqslant x\\ m\ell_i+1\in \mathcal{P},~ i=1,2}}\frac{1}{m}.
\end{align}

Trivially, we have
\begin{align}\label{eq-13-1}
\sum_{\ell_1<x^{\epsilon}}\frac{1}{\ell_1^2}\sum_{\substack{x^{1-\epsilon}<m\leqslant x\\ m\ell_1+1\in \mathcal{P}}}\frac{1}{m}\le \sum_{\ell_1<x^{\epsilon}}\frac{1}{\ell_1^2}\sum_{\substack{x^{1-\epsilon}<m\leqslant x}}\frac{1}{m}\ll \epsilon \log x.
\end{align}
Conveniently, we denote
$$
\mathcal{W}(t)=\sum_{\substack{m\leqslant t\\m\ell_i+1\in \mathcal{P},~ i=1,2}}1.
$$
Integrating by parts again, we find that
$$\sum_{\substack{x^{1-\epsilon}< m\leqslant x\\ m\ell_i+1\in \mathcal{P}, ~i=1,2}}\frac{1}{m}=\frac{\mathcal{W}(x)}{x}-\frac{\mathcal{W}(x^{1-\epsilon})}{x^{1-\epsilon}}
+\int_{x^{1-\epsilon}}^{x}\frac{\mathcal{W}(t)}{t^2}dt.$$
By Lemma \ref{lem2} with $g=2$, we have
\begin{align*}
\sum_{\substack{x^{1-\epsilon}< m\leqslant x\\ m\ell_i+1\in \mathcal{P}, ~i=1,2}}\frac{1}{m}&\ll \prod_{p|\ell_1\ell_2(\ell_2-\ell_1)}\left(1-\frac{1}{p}\right)^{-2}\Big(\frac{1}{\log^2 x}+\int_{x^{1-\epsilon}}^{x}\frac{1}{t(\log t)^2}dt\Big)\\
&\ll \prod_{p|\ell_1\ell_2(\ell_2-\ell_1)}\left(1+\frac{1}{p}\right)^2\frac{1}{\log x},
\end{align*}
which clearly means that
\begin{align}\label{equa3}
\sum_{\ell_1<\ell_2<x^{\epsilon}}\frac{1}{\ell_1\ell_2}\sum_{\substack{x^{1-\epsilon}<m\leqslant x\\ m\ell_i+1\in \mathcal{P}, ~i=1,2}}\frac{1}{m}\ll \frac{1}{\log x}\sum_{\ell_1<\ell_2<x^{\epsilon}}\frac{1}{\ell_1\ell_2}\prod_{p|\ell_1\ell_2(\ell_2-\ell_1)}\left(1+\frac{1}{p}\right)^2.
\end{align}
Employing Lemma \ref{Ding-lem} with $g=\ell=2$, we have
\begin{align}\label{equa4}
\sum_{\ell_1<\ell_2<x^{2\epsilon}}\frac{1}{\ell_1\ell_2}\prod_{p|\ell_1\ell_2(\ell_2-\ell_1)}\left(1+\frac{1}{p}\right)^2\ll \epsilon \log^2x
\end{align}
Thus, we obtain by Eqs. (\ref{equa1}), (\ref{eq-13-1}), (\ref{equa3}) and (\ref{equa4}) that
\begin{align*}
\mathcal{A}_{1}(x)\ll \epsilon \log x
\end{align*}

The sum to be estimated is clearly bounded by $\mathcal{A}_{1}(x)$ due to the partial summations
\begin{align}\label{eq-12-1}
\int_{2}^{x}\frac{\pi(t;m,1)}{(t-1)^2}dt\le \sum_{\substack{p\leqslant x\\p\equiv 1\!\!\!\pmod{m}}}\frac{1}{p-1},
\end{align}
which ends up the proof of our proposition.
\end{proof}

We next provide the proof of Proposition \ref{pro2}.
\begin{proof}[Proof of Proposition \ref{pro2}]
On noting Eq. (\ref{eq-12-1}), we easily find that the sum to be bounded is not larger than
\begin{align*}
\sum_{m\leqslant x^{1-\epsilon}}\varphi(m)\Bigg(\sum_{\substack{p\leqslant m^{1+2\epsilon}\\ p\equiv 1\!\!\!\pmod{m}}}\frac{1}{p-1}\Bigg)^2
:=\mathcal{A}_{2}(x),~\text{say}.
\end{align*}
By expanding the sums, it is clear that
\begin{align*}
\mathcal{A}_{2}(x)=\sum_{m\leqslant x^{1-\epsilon}}\varphi(m)\sum_{\substack{p_i\leqslant m^{1+2\epsilon}\\ p_i\equiv 1\!\!\!\pmod{m},~ i=1,2}}\frac{1}{(p_1-1)(p_2-1)}.
\end{align*}
Suppose that $p_i=m\ell_i+1~(i=1,2)$, then
$$
\ell_i<m^{2\epsilon}\leqslant x^{2\epsilon},
$$
and hence $m>\ell_i^{\frac{1}{2\epsilon}}$. Changing orders of the sums above, we have
\begin{align*}
\mathcal{A}_{2}(x) \leqslant \sum_{\ell_1,\ell_2\leqslant x^{2\epsilon}}\frac{1}{\ell_1\ell_2}\sum_{\substack{\ell_i^{\frac{1}{2\epsilon}}< m\leqslant x\\ m\ell_i+1\in \mathcal{P},~ i=1,2}}\frac{\varphi(m)}{m^2}
\leqslant 2\sum_{\ell_1\leqslant\ell_2\leqslant x^{2\epsilon}}\frac{1}{\ell_1\ell_2}\sum_{\substack{\ell_2^{\frac{1}{2\epsilon}}< m\leqslant x\\ m\ell_i+1\in \mathcal{P},~ i=1,2}}\frac{1}{m}.
\end{align*}
Separating the case of $\ell_1=\ell_2$ with $\ell_1\neq \ell_2$, it follows that
\begin{align}\label{inequality2}
\mathcal{A}_{2}(x)\ll \!\sum_{\ell_1\leqslant x^{2\epsilon}}\frac{1}{\ell_1^2}\sum_{\substack{\ell_1< m\leqslant x\\ m\ell_1+1\in \mathcal{P}}}\frac{1}{m}+\!\!\sum_{\substack{\ell_1<\ell_2\leqslant x^{2\epsilon}}}\!\!\frac{1}{\ell_1\ell_2}\sum_{\substack{\ell_2^{\frac{1}{2\epsilon}}< m\leqslant x\\ m\ell_i+1\in \mathcal{P},~ i=1,2}}\frac{1}{m}:=\mathcal{A}_{2,1}(x)+\mathcal{A}_{2,2}(x),
\end{align}
say.
It is plain that
\begin{align*}
\mathcal{A}_{2,1}(x)=\sum_{\ell_1\leqslant e}\frac{1}{\ell_1^2}\sum_{\substack{\ell_1<m\leqslant x\\ m\ell_1+1\in \mathcal{P}}}\frac{1}{m}+\sum_{e<\ell_1\leqslant x^{2\epsilon}}\frac{1}{\ell_1^2}\sum_{\substack{\ell_1<m\leqslant x\\ m\ell_1+1\in \mathcal{P}}}\frac{1}{m}
\end{align*}
and
$$
\sum_{\ell_1\leqslant e}\frac{1}{\ell_1^2}\sum_{\substack{m\leqslant x\\ m\ell_1+1\in \mathcal{P}}}\frac{1}{m}\le\sum_{\substack{m\leqslant x\\ m+1\in \mathcal{P}}}\frac{1}{m}+\sum_{\substack{m\leqslant x\\ 2m+1\in \mathcal{P}}}\frac{1}{m}<2\sum_{\substack{p\leqslant 2x+1,~ p\in \mathcal{P}}}\frac{3}{p}\ll \log\log x
$$
via the Mertens' formula.

Obviously, we are in a position to employ Lemma \ref{lem2}. Conveniently, we denote
$$
\mathcal{W}_1(t)=\sum_{\substack{m\leqslant t\\~m\ell_1+1\in \mathcal{P}}}1 \quad \text{and} \quad \mathcal{W}_2(t)=\sum_{\substack{m\leqslant t\\m\ell_i+1\in \mathcal{P},~ i=1,2}}1.
$$
Integrating by parts, we find (for $\ell_1>e$) that
$$\sum_{\substack{\ell_1<m\leqslant x\\ m\ell_1+1\in \mathcal{P}}}\frac{1}{m}=\frac{\mathcal{W}_1(x)}{x}-\frac{\mathcal{W}_1(\ell_1)}{\ell_1}
+\int_{\ell_1}^{x}\frac{\mathcal{W}_1(t)}{t^2}dt.$$
By Lemma \ref{lem2} with $g=1$, we have
\begin{align*}
\sum_{\substack{\ell_1<m\leqslant x\\ m\ell_1+1\in \mathcal{P}}}\frac{1}{m}&\ll \prod_{p|\ell_1}\left(1-\frac{1}{p}\right)^{-1}\left(\frac{1}{\log \ell_1}+\int_{e}^{x}\frac{1}{t\log t}dt\right)\\
&\ll \frac{\ell_1}{\varphi(\ell_1)}\left(\frac{1}{\log\ell_1}+\log\log x\right),
\end{align*}
which clearly means that
\begin{align}\label{inequality3}
\mathcal{A}_{2,1}(x)\ll\sum_{\ell_1<x^{2\epsilon}}\frac{1}{\ell_1^2}\sum_{\substack{\ell_1< m\leqslant x\\ m\ell_1+1\in \mathcal{P}}}\frac{1}{m}+\log\log x\ll \log\log x.
\end{align}

Similar to $\mathcal{A}_{2,1}(x)$, we have
\begin{align}\label{eq-15-1}
\mathcal{A}_{2,2}(x)=\sum_{\substack{ e<\ell_2\leqslant x^{2\epsilon}\\ \ell_1<\ell_2}}\frac{1}{\ell_1\ell_2}\sum_{\substack{\ell_2^{\frac{1}{2\epsilon}}< m\leqslant x\\ m\ell_i+1\in \mathcal{P},~ i=1,2}}\frac{1}{m}+O(\log\log x).
\end{align}
Integrating by parts again, we find (for $\ell_2>e$) that
$$\sum_{\substack{\ell_2^{\frac{1}{2\epsilon}}< m\leqslant x\\ m\ell_i+1\in \mathcal{P}, ~i=1,2}}\frac{1}{m}=\frac{\mathcal{W}_2(x)}{ x}-\frac{\mathcal{W}_2\left(\ell_2^{\frac{1}{2\epsilon}}\right)}{\ell_2^{\frac{1}{2\epsilon}}}
+\int_{\ell_2^{\frac{1}{2\epsilon}}}^{x}\frac{\mathcal{W}_2(t)}{t^2}dt.$$
By Lemma \ref{lem2} with $g=2$, we have
\begin{align}\label{inequality4}
\sum_{\substack{\ell_2^{\frac{1}{2\epsilon}}< m\leqslant x\\ m\ell_i+1\in \mathcal{P}, ~i=1,2}}\frac{1}{m}&\ll \prod_{p|\ell_1\ell_2(\ell_2-\ell_1)}\left(1-\frac{1}{p}\right)^{-2}\Big(\frac{1}{(\log x)^2}+\frac{\epsilon^2}{(\log \ell_2)^2}+\int_{\ell_2^{\frac{1}{2\epsilon}}}^{x}\frac{1}{t(\log t)^2}dt\Big)\nonumber\\
&\ll \prod_{p|\ell_1\ell_2(\ell_2-\ell_1)}\left(1+\frac{1}{p}\right)^2\left(\frac{1}{\log x}+\frac{\epsilon}{\log \ell_2}\right).
\end{align}
Inserting Eq. (\ref{inequality4}) into Eq. (\ref{eq-15-1}), we get
\begin{align}\label{eq-15-2}
\mathcal{A}_{2,2}(x)\ll \log\log x+\epsilon\log x+\epsilon\sum_{\substack{ e<\ell_2\leqslant x^{2\epsilon}\\ \ell_1\leqslant\ell_2}}\frac{1}{\ell_1\ell_2\log \ell_2}\prod_{p|\ell_1\ell_2(\ell_2-\ell_1)}\left(1+\frac{1}{p}\right)^2
\end{align}
via Lemma \ref{Ding-lem}. Then, we will give a more cautious calculation for the reminding sum above as follows:
\begin{align}\label{inequality5}
 \sum_{\substack{ e<\ell_2\leqslant x^{2\epsilon}\\ \ell_1<\ell_2}} \frac{\prod\limits_{p|\ell_1\ell_2(\ell_2-\ell_1)}(1+\frac{1}{p})^2}{\ell_1\ell_2\log\ell_2}&\ll \sum_{\substack{ e<\ell_2\leqslant x^{2\epsilon}\\ \ell_1<\ell_2}}\frac{\prod\limits_{p|\ell_1\ell_2(\ell_2-\ell_1)}(1+\frac{1}{p})^2}{\ell_1\ell_2}\int_{\ell_2}^{\infty}\frac{dt}{t(\log t)^2}\nonumber\\
 &\ll\int_{e}^{\infty}\sum_{\substack{1\leqslant\ell_1<\ell_2\leqslant \min\{x^{2\epsilon},t\}}}\frac{\prod\limits_{p|\ell_1\ell_2(\ell_2-\ell_1)}(1+\frac{1}{p})^2}{\ell_1\ell_2}\frac{dt}{t(\log t)^2}.
\end{align}
We now divide the integral above into the following two shorter ones, with the aid of Lemma \ref{Ding-lem} again, to get
\begin{align}\label{eq-16-1}
\int_{e}^{x^{2\epsilon}}\sum_{\substack{1\leqslant\ell_1<\ell_2\leqslant t}}\frac{\prod\limits_{p|\ell_1\ell_2(\ell_2-\ell_1)}(1+\frac{1}{p})^2}{\ell_1\ell_2}\frac{dt}{t(\log t)^2}
 \ll\int_{e}^{x^{2\epsilon}}\frac{dt}{t}
 \ll\epsilon\log x+1
\end{align}
and
\begin{align}\label{eq-16-2}
\int_{x^{2\epsilon}}^{\infty}\frac{dt}{t(\log t)^2}\sum_{\substack{1\leqslant\ell_1<\ell_2\leqslant x^{2\epsilon}}}\frac{\prod\limits_{p|\ell_1\ell_2(\ell_2-\ell_1)}(1+\frac{1}{p})^2}{\ell_1\ell_2}
&\ll(\epsilon\log x)^2\int_{x^{2\epsilon}}^{\infty}\frac{dt}{t(\log t)^2}\nonumber\\
&\ll\epsilon\log x.
\end{align}
Collecting together Eqs. from (\ref{eq-15-1}) to (\ref{eq-16-2}), we obtain the estimate
\begin{align}\label{eq-16-3}
\mathcal{A}_{2,2}(x)\ll \epsilon \log x+\log\log x.
\end{align}

Our proposition follows from Eqs. (\ref{inequality2}), (\ref{inequality3}) and (\ref{eq-16-3}).
\end{proof}

Below we give the proof of the last proposition.
\begin{proof}[Proof of Proposition \ref{pro3}]
Let
$$
\mathcal{A}_{3}(x)=\sum_{m\leqslant x^{1-\epsilon}}\varphi(m)\int_{2}^{m^{1+\epsilon}}\frac{\pi(t;m,1)}{(t-1)^2}dt\int_{m^{1+\epsilon}}^{x}\frac{\pi(t;m,1)}{(t-1)^2}dt
$$
Note firstly that
\begin{align*}
\int_{m^{1+\epsilon}}^{x}\frac{\pi(t;m,1)}{(t-1)^2}dt&\ll\frac{1}{\varphi(m)}\int_{m^{1+\epsilon}}^{x}\frac{t}{(t-1)^2\log \frac{t}{m}}dt\ll\frac{\log\log x-\log\log m+\log \epsilon^{-1}}{\varphi(m)}
\end{align*}
via Lemma \ref{lem1}. Inserting this estimate into $\mathcal{A}_{3}(x)$, we get
\begin{align}\label{eq-17-1}
\mathcal{A}_{3}(x)=O\left(\mathcal{A}_{3,1}(x)\right)+O\left(\log \epsilon^{-1}\mathcal{A}_{3,2}(x)\right),
\end{align}
where
$$
\mathcal{A}_{3,1}(x)\ll \sum_{m\leqslant x^{1-\epsilon}}(\log\log x-\log\log m)\int_{2}^{m^{1+\epsilon}}\frac{\pi(t;m,1)}{(t-1)^2}dt
$$
and
$$
\mathcal{A}_{3,2}(x)\ll \sum_{m\leqslant x^{1-\epsilon}}\int_{2}^{m^{1+\epsilon}}\frac{\pi(t;m,1)}{(t-1)^2}dt.
$$

By partial summations, we clearly have
\begin{align*}
\int_{2}^{m^{1+\epsilon}}\frac{\pi(t;m,1)}{(t-1)^2}dt\leqslant \sum_{\substack{p\leqslant m^{1+\epsilon}\\p\equiv 1\!\!\!\pmod{m}}}\frac{1}{p-1}.
\end{align*}
Hence, on putting $p-1=\ell m$ we obtain
\begin{align*}
\mathcal{A}_{3,2}(x)\ll \sum_{m\leqslant x^{1-\epsilon}}\sum_{\substack{p\leqslant m^{1+\epsilon}\\p\equiv 1\!\!\!\pmod{m}}}\frac{1}{p-1}=\sum_{m\leqslant x^{1-\epsilon}}\sum_{\substack{\ell< m^\epsilon\\ m\ell+1\in \mathcal{P}}}\frac{1}{m\ell}\le \sum_{\ell\leqslant x^\epsilon}\frac{1}{\ell}\sum_{\substack{\ell<m\leqslant x\\ m\ell+1\in \mathcal{P}}}\frac{1}{m}.
\end{align*}
For $\ell\leqslant e$, it is plain that
$$
\sum_{\ell\leqslant e}\frac{1}{\ell}\sum_{\substack{\ell<m\leqslant x\\ m\ell+1\in \mathcal{P}}}\frac{1}{m}\leqslant \sum_{\substack{m\leqslant x\\ m+1\in \mathcal{P}}}\frac{1}{m}+\sum_{\substack{m\leqslant x\\ 2m+1\in \mathcal{P}}}\frac{1}{m}<2\sum_{\substack{p\leqslant 2x+1,~ p\in \mathcal{P}}}\frac{3}{p}\ll \log\log x,
$$
from which it follows that
\begin{align}\label{eq-17-31}
\mathcal{A}_{3,2}(x)\ll  \sum_{e<\ell\leqslant x^\epsilon}\frac{1}{\ell}\sum_{\substack{\ell<m\leqslant x\\ m\ell+1\in \mathcal{P}}}\frac{1}{m}
+\log\log x.
\end{align}

Let
$$
\mathcal{W}_1(t)=\sum_{\substack{m\leqslant t\\~m\ell+1\in \mathcal{P}}}1.
$$
Integrating by parts, we have
\begin{align*}
\sum_{\substack{\ell<m\leqslant x\\ m\ell+1\in \mathcal{P}}}\frac{1}{m}&=\frac{\mathcal{W}_1(x)}{x}-\frac{\mathcal{W}_1\left(\ell\right)}{\ell}+\int_{\ell}^{x}\frac{\mathcal{W}_1(t)}{t^2}dt\\
&\ll \prod_{p|\ell}\left(1-\frac{1}{p}\right)^{-1}\left(\frac{1}{\log x}+\frac{1}{\log \ell}+\int_{\ell}^{x}\frac{1}{t\log t}dt\right)\\
&\ll \prod_{p|\ell}\left(1-\frac{1}{p}\right)^{-1}\left(\frac{1}{\log \ell}+\log\log x-\log\log\ell\right)
\end{align*}
via Lemma \ref{lem2} for $\ell>e$. Taking the estimate above into Eq. (\ref{eq-17-31}), we deduce from Lemma \ref{Ding-lem} that
\begin{align}\label{eq-17-32}
\mathcal{A}_{3,2}(x)\ll  \sum_{e<\ell\leqslant x^\epsilon}\frac{\log\log x-\log\log\ell}{\varphi(\ell)} +\log\log x+\log\epsilon^{-1}.
\end{align}
Again, integrating by parts, we would have
\begin{align*}
\sum_{e<\ell\leqslant x^\epsilon}\frac{\log\log x-\log\log\ell}{\varphi(\ell)}=K\left(x^\epsilon\right)(\log\log x-\log\log x^\epsilon)+\int_{e}^{x^\epsilon}\frac{K(t)}{t\log t}dt,
\end{align*}
where
$$
K(t)=\sum_{e<\ell\leqslant t}\frac{1}{\varphi(m)}.
$$
Since $K(t)\ll \log t$ from Lemma \ref{landau}, it follows immediately that
\begin{align}\label{eq-17-33}
\mathcal{A}_{3,2}(x)\ll  \epsilon\log\epsilon^{-1}\log x+\log\log x+\log\epsilon^{-1}.
\end{align}

Now, by trivial adjustments of the arguments to $\mathcal{A}_{3,2}(x)$, we can obtain that
\begin{align*}
\mathcal{A}_{3,1}(x)&\leqslant \sum_{m\leqslant x^{1-\epsilon}}(\log\log x-\log\log m)\sum_{\substack{p\leqslant m^{1+\epsilon}\\p\equiv 1\!\!\!\pmod{m}}}\frac{1}{p-1}\\
&\le \sum_{\ell\leqslant x^\epsilon}\frac{1}{\ell}\sum_{\substack{\ell<m\leqslant x\\ m\ell+1\in \mathcal{P}}}\frac{\log\log x-\log\log m}{m}\\
&\le \sum_{\ell\leqslant x^\epsilon}\frac{\log\log x-\log\log \ell}{\ell}\sum_{\substack{\ell<m\leqslant x\\ m\ell+1\in \mathcal{P}}}\frac{1}{m}\\
&\ll \sum_{e<\ell\leqslant x^\epsilon}\frac{\log\log x-\log\log \ell}{\ell}\sum_{\substack{\ell<m\leqslant x\\ m\ell+1\in \mathcal{P}}}\frac{1}{m}+(\log\log x)^2\\
&\ll \sum_{e<\ell\leqslant x^\epsilon}\frac{(\log\log x-\log\log \ell)^2}{\ell}+(\log\log x)^2+\left(\log\epsilon^{-1}\right)^2.
\end{align*}
Routine computations give that
$$
\sum_{e<\ell \leqslant x^\epsilon}\frac{(\log\log x-\log\log \ell)^2}{\ell}\ll \epsilon(\log \epsilon^{-1})^2\log x+(\log\log x)^2,
$$
and hence
\begin{align}\label{eq-17-34}
\mathcal{A}_{3,1}(x)\ll \epsilon(\log \epsilon^{-1})^2\log x+(\log\log x)^2+\left(\log\epsilon^{-1}\right)^2.
\end{align}

The proposition now follows immediately from Eqs. (\ref{eq-17-1}), (\ref{eq-17-33}) and (\ref{eq-17-34}).
\end{proof}

\section{Proof of Theorem \ref{thm1}}

Now, let's start the proof of Theorem \ref{thm1}.

\begin{proof}[Proof of Theorem \ref{thm1}]
By Lemma \ref{lem3-3} and Proposition \ref{pro1}, we have
\begin{align*}
\sum_{n\leqslant x}\omega^*(n)^2/x=\sum_{1<m\leqslant x^{1-\epsilon}}\varphi(m)\bigg(\int_{2}^{x}\frac{\pi(t;m,1)}{(t-1)^2}dt\bigg)^2
+O\left(\epsilon \log x+(\log\log x)^3\right).
\end{align*}
Note that
\begin{align*}
\int_{2}^{x}\frac{\pi(t;m,1)}{(t-1)^2}dt=\int_{2}^{m^{1+\epsilon}}\frac{\pi(t;m,1)}{(t-1)^2}dt+\int_{m^{1+\epsilon}}^{x}\frac{\pi(t;m,1)}{(t-1)^2}dt.
\end{align*}
Thus, we obtain
\begin{align*}
\sum_{1<m\leqslant x^{1-\epsilon}}\varphi(m)\bigg(\int_{2}^{x}\frac{\pi(t;m,1)}{(t-1)^2}dt\bigg)^2
=\sum_{1<m\leqslant x^{1-\epsilon}}\varphi(m)\bigg(\int_{m^{1+\epsilon}}^{x}\frac{\pi(t;m,1)}{(t-1)^2}dt\bigg)^2+\mathcal{R}_1(x)
\end{align*}
via Propositions \ref{pro2} and \ref{pro3}, where
\begin{align*}
\mathcal{R}_1(x)\ll\epsilon(\log \epsilon^{-1})^2\log x+(\log\log x)^3+\left(\log\epsilon^{-1}\right)^2.
\end{align*}
It follows that
\begin{align}\label{eq-19-1}
\sum_{n\leqslant x}\omega^*(n)^2/x=\sum_{1<m\leqslant x^{1-\epsilon}}\varphi(m)\bigg(\int_{m^{1+\epsilon}}^{x}\frac{\pi(t;m,1)}{(t-1)^2}dt\bigg)^2+\mathcal{R}_1(x).
\end{align}

On putting
\begin{align*}
\int_{m^{1+\epsilon}}^{x}\frac{\pi(t;m,1)}{(t-1)^2}dt=\int_{m^{1+\epsilon}}^{x}\frac{\pi(t)}{\varphi(m)(t-1)^2}dt+
\int_{m^{1+\epsilon}}^{x}\frac{\pi(t;m,1)-\pi(t)/\varphi(m)}{(t-1)^2}dt,
\end{align*}
we get from Eq. (\ref{eq-19-1}) that
\begin{align}\label{eq-19-2}
\sum_{n\leqslant x}\omega^*(n)^2/x=\sum_{1<m\leqslant x^{1-\epsilon}}\frac1{\varphi(m)}\bigg(\int_{m^{1+\epsilon}}^{x}\frac{\pi(t)}{(t-1)^2}dt\!\bigg)^2+\mathcal{R}_1(x)+\mathcal{R}_2(x)+\mathcal{R}_3(x),
\end{align}
where
$$
\mathcal{R}_2(x)\ll\sum_{1<m\leqslant x^{1-\epsilon}}\varphi(m)\bigg(\int_{m^{1+\epsilon}}^{x}\frac{\pi(t;m,1)-\pi(t)/\varphi(m)}{(t-1)^2}dt\bigg)^2
$$
and
$$
\mathcal{R}_3(x)\ll \sum_{1<m\leqslant x^{1-\epsilon}}\varphi(m)\int_{m^{1+\epsilon}}^{x}\frac{\pi(t)}{\varphi(m)(t-1)^2}dt
\int_{m^{1+\epsilon}}^{x}\frac{\left|\pi(t;m,1)-\pi(t)/\varphi(m)\right|}{(t-1)^2}dt.
$$

We are in a position to employ the Elliott--Halberstam Conjecture.
By Conjecture \ref{conjecture1} with $A=2$ and the trivial estimates, we have
\begin{align}\label{eq-19-3}
\mathcal{R}_3(x)
&\ll\sum_{1<m\leqslant x^{1-\epsilon}}\int_{m^{1+\epsilon}}^{x}\frac{1}{t\log t}dt
\int_{m^{1+\epsilon}}^{x}\left|\pi(t;m,1)-\frac{\pi(t)}{\varphi(m)}\right|\frac{1}{t^2}dt\nonumber\\
&\ll\log\log x\sum_{1<m\leqslant x^{1-\epsilon}}\int_{m^{1+\epsilon}}^{x}\left|\pi(t;m,1)-\frac{\pi(t)}{\varphi(m)}\right|\frac{1}{t^2}dt\nonumber\\
&\leqslant \log\log x\int_{2}^{x}\sum_{m\leqslant t^{\frac1{1+\epsilon}}}\left|\pi(t;m,1)-\frac{\pi(t)}{\varphi(m)}\right|\frac{1}{t^2}dt\nonumber\\
&\ll_{\epsilon} \log\log x\int_{2}^{x}\frac{1}{t(\log t)^2}dt\nonumber\\
&\ll_{\epsilon} \log\log x.
\end{align}
Note that
$$
\int_{m^{1+\epsilon}}^{x}\left|\pi(t;m,1)-\frac{\pi(t)}{\varphi(m)}\right|\frac{1}{(t-1)^2}dt\ll\frac{1}{\varphi(m)}\int_{m^{1+\epsilon}}^{x}\frac{1}{t\log t}dt\ll \frac{\log\log x}{\varphi(m)}
$$
Again, by Conjecture \ref{conjecture1} with $A=2$ and the trivial estimates, we have
\begin{align}\label{eq-19-4}
\mathcal{R}_3(x)&\ll \sum_{1<m\leqslant x^{1-\epsilon}}\varphi(m)\bigg(\int_{m^{1+\epsilon}}^{x}\frac{\pi(t;m,1)-\pi(t)/\varphi(m)}{(t-1)^2}dt\bigg)^2\nonumber\\
&\ll\log\log x\sum_{1<m\leqslant x^{1-\epsilon}}\int_{m^{1+\epsilon}}^{x}\left|\pi(t;m,1)-\frac{\pi(t)}{\varphi(m)}\right|\frac{1}{t^2}dt\nonumber\\
&\ll_{\epsilon} \log\log x
\end{align}
by the same argument with $\mathcal{R}_3(x)$.

Combing Eqs. (\ref{eq-19-1}), (\ref{eq-19-2}), (\ref{eq-19-3}) and (\ref{eq-19-4}), we conclude that
\begin{align}\label{eq-19-5}
\sum_{n\leqslant x}\omega^*(n)^2/x=\sum_{1<m\leqslant x^{1-\epsilon}}\frac1{\varphi(m)}\bigg(\!\int_{m^{1+\epsilon}}^{x}\frac{\pi(t)}{(t-1)^2}dt\bigg)^2+\mathcal{R}(x),
\end{align}
where
$$
\mathcal{R}(x)=O\left(\epsilon(\log \epsilon^{-1})^2\log x+(\log\log x)^3+\left(\log\epsilon^{-1}\right)^2\right)+O_\epsilon\left(\log\log x\right).
$$
Now, by the prime number theorem we have
\begin{align*}
\int_{m^{1+\epsilon}}^{x}\frac{\pi(t)}{(t-1)^2}dt&=(1+o(1))\int_{m^{1+\epsilon}}^{x}\frac{t}{(t-1)^2\log t}dt\\
&=(1+o(1))\int_{m^{1+\epsilon}}^{x}\frac{1}{t\log t}\left(1+O\left(\frac{1}{t}\right)\right)dt\\
&=(1+o(1))(\log\log x-\log\log m)+O\left(\epsilon+\frac{1}{m}\right).
\end{align*}
Taking the estimate above into Eq. (\ref{eq-19-5}), we deduce that
\begin{align}\label{eq-19-6}
\sum_{n\leqslant x}\omega^*(n)^2/x=(1+o(1))\sum_{1<m\leqslant x^{1-\epsilon}}\frac{(\log\log x-\log\log m)^2}{\varphi(m)}+\mathcal{R}(x)+\widetilde{\mathcal{R}}(x),
\end{align}
where
$$
\widetilde{\mathcal{R}}(x)\ll \epsilon\sum_{1<m\leqslant x}\frac{\log\log x-\log\log m}{\varphi(m)}+\epsilon \log x+\log\log x.
$$
Employing Lemma \ref{landau} with routine computations, we obtain
$$
\epsilon\sum_{1<m\leqslant x}\frac{\log\log x-\log\log m}{\varphi(m)}\ll \epsilon \log x
$$
and
$$
\sum_{1<m\leqslant x^{1-\epsilon}}\frac{(\log\log x-\log\log m)^2}{\varphi(m)}
=(1+O(\epsilon))2\frac{\zeta(2)\zeta(3)}{\zeta(6)}\log x+O\left((\log\log x)^3\right)
$$
by partial summations. Collecting the estimates above, we know from Eq. (\ref{eq-19-6}) that
\begin{align*}
\sum_{n \leqslant x}\omega^*(n)^2/x=f(\epsilon)\log x+O\left((\log\log x)^3+\left(\log\epsilon^{-1}\right)^2\right)+O_\epsilon(\log\log x),
\end{align*}
where
$$
f(\epsilon)=2\kappa+O(\epsilon(\log \epsilon^{-1})^2) \quad \text{and} \quad \kappa=\frac{\zeta(2)\zeta(3)}{\zeta(6)}.
$$
Hence, we have
$$
\sum_{n\leqslant x}\omega^*(n)^2/(x\log x)=f(\epsilon)+O\left(\frac{(\log\log x)^3}{\log x}+\frac{\left(\log \epsilon^{-1}\right)^2}{\log x}\right)+O_\epsilon\left(\frac{\log\log x}{\log x}\right),
$$
from which it follows that
\begin{align*}
2\kappa+O(\epsilon(\log \epsilon^{-1})^2)\leqslant \liminf_{x\rightarrow\infty}\frac{\sum\limits_{n\leqslant x}\omega^*(n)^2}{x\log x}\leqslant \limsup_{x\rightarrow\infty}\frac{\sum\limits_{n\leqslant x}\omega^*(n)^2}{x\log x}\leqslant 2\kappa+O(\epsilon(\log \epsilon^{-1})^2).
\end{align*}
Making $\epsilon\rightarrow 0$, we have
$$2\frac{\zeta(2)\zeta(3)}{\zeta(6)}\leqslant\liminf_{x\rightarrow\infty}\frac{\sum\limits_{n\leqslant x}\omega^*(n)^2}{x\log x}\leqslant \limsup_{x\rightarrow\infty}\frac{\sum\limits_{n\leqslant x}\omega^*(n)^2}{x\log x}\leqslant 2\frac{\zeta(2)\zeta(3)}{\zeta(6)}$$
since
$$
\epsilon(\log \epsilon^{-1})^2\rightarrow 0
$$
as $\epsilon\rightarrow 0$. Therefore,
$$\sum_{n\leqslant x}\omega^*(n)^2\sim 2\frac{\zeta(2)\zeta(3)}{\zeta(6)} x\log x, \quad (\text{as~} x\rightarrow\infty).$$

This completes the proof of Theorem \ref{thm1}.
\end{proof}

\section{Final remarks}
It is clear that we have the following corollary
$$\sum_{p,q\leqslant x}\frac{1}{[p-1,q-1]}\sim  2\frac{\zeta(2)\zeta(3)}{\zeta(6)}\log x$$
as $x\rightarrow \infty$ under the Elliott--Halberstam Conjecture due to (see \cite[page 6, last line]{MM})
$$\sum_{n\leqslant x}\omega^*(n)^2=\sum_{p,q\leqslant x}\frac{x}{[p-1,q-1]}+O(x)$$
and Theorem \ref{thm1}.

We cannot obtain explicit error terms in Theorem \ref{thm1} since it is certain that there is an oscillation of $\epsilon$--proportion of the main term which comes from the Elliott--Halberstam Conjecture naturally.

We believe that an unconditional proof of our main theorem is widely open and should be considered of independent interest. However, this seems unreachable under the current circumstance.

\section*{Acknowledgments}
The third named author would like to thank Lilu Zhao for his constant help and encouragement.

The first named author is supported by National Natural Science Foundation of China  (Grant No. 12201544), Natural Science Foundation of Jiangsu Province, China (Grant No. BK20210784), China Postdoctoral Science Foundation (Grant No. 2022M710121). He is also supported by  (Grant No. JSSCBS20211023 and YZLYJF2020PHD051).

The second named author is supported by the National Natural Science Foundation of China (No. 11901447), the Fundamental Research Funds for the Central Universities (No. xzy012021030) and the Shaanxi Fundamental Science Research Project for Mathematics and Physics (No. 22JSY006).

The third named author is supported by the National Key Research and Development Program of China (Grant No. 2021YFA1000700).

\end{document}